\begin{document}
\title{On the minimal extension and structure of braided weakly group-theoretical fusion categories}
\author{Victor Ostrik and Zhiqiang Yu}
\date{}
\maketitle

\newtheorem{theo}{Theorem}[section]
\newtheorem{prop}[theo]{Proposition}
\newtheorem{lemm}[theo]{Lemma}
\newtheorem{coro}[theo]{Corollary}
\theoremstyle{definition}
\newtheorem{defi}[theo]{Definition}
\newtheorem{exam}[theo]{Example}
\newtheorem{remk}[theo]{Remark}
\newtheorem{conj}[theo]{Conjecture}

\newcommand{\A}{\mathcal{A}}
\newcommand{\B}{\mathcal{B}}
\newcommand{\C}{\mathcal{C}}
\newcommand{\D}{\mathcal{D}}
\newcommand{\E}{\mathcal{E}}
\newcommand{\FPdim}{\text{FPdim}}
\newcommand{\I}{\mathcal{I}}
\newcommand{\K}{\mathds{k}}
\newcommand{\M}{\mathcal{M}}
\newcommand{\N}{\mathcal{N}}
\newcommand{\Q}{\mathcal{O}}
\newcommand{\Rep}{\text{Rep}}
\newcommand{\W}{\mathcal{W}}
\newcommand{\Y}{\mathcal{Z}}
\newcommand{\Z}{\mathbb{Z}}
\theoremstyle{plain}

\abstract
We   show that any slightly degenerate weakly group-theoretical fusion category admits a minimal non-degenerate extension. Let  $d$ be a positive square-free integer,  given a  weakly group-theoretical non-degenerate  fusion category $\C$, assume that $\FPdim(\C)=nd$ and $(n,d)=1$. If $(\FPdim(X)^2,d)=1$ for all simple objects $X$ of $\C$, then we   show that $\C$ contains a non-degenerate fusion subcategory $\C(\Z_d,q)$. In particular, we obtain that integral  fusion categories of  Frobenius-Perron dimensions $p^md$ such that $\C'\subseteq \text{sVec}$ are  nilpotent and group-theoretical, where $p$ is a prime and  $(p,d)=1$.

\bigskip
\noindent {\bf Keywords:} Minimal   extension; slightly degenerate fusion category;  weakly group-theoretical fusion category

Mathematics Subject Classification 2020: 18M05 $\cdot$ 18M20
\section{Introduction}
In this paper, we always work over an algebraically closed field $\K$  of characteristic zero.

A fusion category $\C$ is a semisimple  $\K$-linear finite abelian tensor category, we use $\Q(\C)$ to denote the set of isomorphism classes of simple objects of $\C$. Fusion category $\C$ is a braided fusion category if $\C$ is equipped with  a braiding $c$. Let $\D\subseteq\C$ be a fusion subcategory of braided fusion category $\C$. Recall that  the centralizer  $\D_\C'$ of $\D$ in $\C$ is the fusion subcategory generated by simple objects $Y$ of $\C$ such that $c_{Y,X}c_{X,Y}=\text{id}_{X\otimes Y}$, for all  objects $X\in\Q(\D)$, see \cite{Mu}; in particular, we call $\C':=\C_\C'$ the M\"{u}ger center of $\C$.

Let $\E$ be an arbitrary symmetric fusion category,  i.e., $\E'=\E$. Recall that a braided fusion category over $\E$
is a braided fusion category $\C$ equipped with a braided tensor embedding $\E \to \C'$, see e.g. \cite[2.9]{DNO}.
A braided fusion category over $\E$ is non-degenerate over $\E$ if the embedding $\E \to \C'$ is an equivalence.
In the special cases when $\E=\text{Vec}$ or $\E=\text{sVec}$ is the category of finite dimensional vector or super vector spaces over $\K$, this specializes to the notions of non-degenerate and slightly degenerate braided fusion
categories.

A non-degenerate braided fusion category $\A$ is called a minimal extension of a braided fusion category $\C$ if $\C\subseteq\A$ and $\C_\A'=\C'=:\E$. It was conjectured that these minimal extensions exist for an arbitrary braided fusion category $\C$ \cite[Conjecture 5.2]{Mu}. However, counterexamples posed by Drinfeld \cite{Drinfeld} shows that this conjecture fails for some non-degenerate fusion categories over certain Tannakian fusion categories  $\E$ (recall that $\E$ is Tannakian if it is braided equivalent to representation category $\Rep(G)$ for some
finite group $G$, see e.g. \cite{DrGNO2}). The following special case of the minimal extension conjecture
is still open and is of great interest:

\begin{conj}\footnote{This conjecture is proved in a recent paper \cite{JR}. We hope that our treatment of the special case still will be useful for the readers.} \label{minextconj}
Any slightly degenerate fusion category admits a minimal extension.
\end{conj}

Note that it is proved in \cite{LKW} and \cite{BGHNPRW} that the set of all minimal extensions (up to a suitable equivalence) of
a given slightly degenerate category is a torsor over group ${\mathbb Z}/16$. Conjecture \ref{minextconj}
states that this torsor is non-empty.


Our first main result is a proof of Conjecture \ref{minextconj} for a class of slightly degenerate fusion categories. Recall that a fusion category $\C$ is weakly group-theoretical if $\C$ is Morita equivalent to a nilpotent fusion category $\D$ \cite{ENO3} (see section \ref{section2} for the definition of nilpotency of fusion categories), equivalently, there exists  a braided equivalence between Drinfeld centers $\Y(\C)\cong\Y(\D)$ \cite[Theorem 1.3]{ENO3}.
\begin{theo}[Theorem \ref{theo2sec3}]\label{maintheorem1}Every slightly degenerate weakly group-theoretical fusion category admits a minimal  extension.
\end{theo}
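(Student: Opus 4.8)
The plan is to argue by induction on $\FPdim(\C)$, first stripping off all non-degenerate pieces by Müger's decomposition theorem and then treating the resulting \emph{anisotropic} category (one containing no non-degenerate fusion subcategory besides $\text{Vec}$) either by a reduction to smaller dimension or, when no such reduction is available, by an explicit construction.

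For the decomposition step, suppose $\C$ contains a non-degenerate fusion subcategory $\D\neq\text{Vec}$. Then $\C\simeq\D\boxtimes\D_\C'$ as braided fusion categories \cite{Mu}, and since $\C'=\text{sVec}$ while $\D'=\text{Vec}$ one computes $(\D_\C')'=\text{sVec}$; thus $\D_\C'$ is again slightly degenerate and weakly group-theoretical, of strictly smaller Frobenius--Perron dimension. By induction $\D_\C'$ has a minimal extension $\M$, and then $\D\boxtimes\M$ is non-degenerate, contains $\C$, and satisfies $\C_{\D\boxtimes\M}'=\text{Vec}\boxtimes\text{sVec}=\text{sVec}$, so it is a minimal extension of $\C$. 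Hence we may assume $\C$ is anisotropic.

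It remains to extend an anisotropic $\C$. If $\C$ carries a nontrivial Tannakian subcategory $\E=\Rep(\Z/p)$, I would condense it, passing to the braided category $\C_A^0$ of local modules over the function algebra $A$ of $\Z/p$; this lowers $\FPdim$ by $p$. One checks that the central fermion remains transparent and is not joined to new deconfined objects, so that $\C_A^0$ is again slightly degenerate and weakly group-theoretical, and the induction hypothesis applies to it. A minimal extension of $\C$ is then recovered by reversing the condensation, i.e. by reconstructing $\C$ from $\C_A^0$ together with its $\Z/p$-action and transporting the extension through the corresponding equivariantization. When instead $\C$ has no nontrivial Tannakian subcategory, the weakly group-theoretical (indeed weakly integral) hypothesis forces $\C$ into a short list of anisotropic cores --- the pointed ones correspond to finite quadratic forms whose radical is the fermion, and the bound on the rank of anisotropic forms together with weak integrality limits the non-pointed possibilities --- and for each of these one writes down a minimal extension directly, building on the known minimal extensions of $\text{sVec}$ (the Ising categories) and of the super-Tannakian categories $\Rep(G,z)$.

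The main obstacle is the reconstruction in the condensation step, together with the classification of the anisotropic cores. Reversing a condensation requires lifting the $\Z/p$-action to the minimal extension of $\C_A^0$ and trivializing the resulting obstructions, which live in the group cohomology $H^2$ and $H^3$ of the structure groups; this is precisely where the weakly group-theoretical hypothesis is decisive, since it lets the entire reduction be factored into steps by cyclic groups of prime order, where these obstruction groups are small and the $\Z/16$ of freedom in the choice of extension of $\text{sVec}$ can be used to absorb them. Controlling the Müger center across condensation --- guaranteeing that slight degeneracy is preserved and that no unexpected transparent object appears --- is the other delicate point, and is handled by centralizer computations of the type in \cite{Mu,DNO}.
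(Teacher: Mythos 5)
Your first reduction is sound: if $\D\neq\text{Vec}$ is a non-degenerate fusion subcategory, then $\C\cong\D\boxtimes\D'_\C$ by M\"uger's double centralizer theorem, the factor $\D'_\C$ is again slightly degenerate and weakly group-theoretical, and a minimal extension $\M$ of it yields the minimal extension $\D\boxtimes\M$ of $\C$; your terminal case (no Tannakian subcategory) is also essentially what the paper uses, namely Natale's classification \cite{Na}. The genuine gap is the step you yourself call the main obstacle: reversing the condensation by $\Rep(\Z/p)$. Concretely, to pass from a minimal extension $\M$ of $\C_A^0$ back to one of $\C$ you must (a) lift the $\Z/p$-action on $\C_A^0$ to an action on $\M$ by braided autoequivalences --- a braided autoequivalence of $\C_A^0$ need not extend to a \emph{given} minimal extension at all, since it may permute the sixteen extensions nontrivially, and pointwise extensions need not assemble into a group action --- and (b) extend that action to a faithful braided $\Z/p$-crossed extension of $\M$ compatible with the given crossed extension $\C_{\Z/p}$ of $\C_A^0$, so that its equivariantization is non-degenerate and contains $\C$. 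In the formalism of \cite{ENO2}, step (b) carries obstructions $o_3\in H^3(\Z/p,\text{Inv}(\M))$ and $o_4\in H^4(\Z/p,\K^*)$; the latter vanishes for cyclic groups, but the former is in general non-zero (e.g.\ $H^3(\Z/p,\Z/p)\cong\Z/p$, and $\text{Inv}(\M)$ can certainly have $p$-torsion), and you give no argument that some choice among the sixteen $\M$'s kills it. This cannot be waved away: Drinfeld's counterexample \cite{Drinfeld}, cited in the paper, is exactly a demonstration that such lifting obstructions can be non-trivial for categories over Tannakian subcategories, and the weakly group-theoretical hypothesis does not enter your sketch at the one point where it would have to do real work.

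The paper avoids the lifting problem altogether, and that is its key idea. Theorem \ref{theo1sec3} shows that admitting a minimal extension is a \emph{Witt invariant} of non-degenerate categories over $\E$: if $\C$ is Witt equivalent over $\E=\text{sVec}$ to a split category $\text{sVec}\boxtimes\D$, then, writing $\C\boxtimes_{\text{sVec}}\B^{\text{rev}}\cong\Y(\A_1,\text{sVec})$, the centralizer of $\D^{\text{rev}}$ inside the full Drinfeld center $\Y(\A_1)$ is already a minimal extension of $\C$ --- no action is lifted and no cohomology appears. Combined with \cite[Corollary 4.6]{DNO}, which gives $\C\boxtimes_{\text{sVec}}(\C_G^0)^{\text{rev}}\cong\Y(\C_G,\text{sVec})$, i.e.\ $\C$ is Witt equivalent to its core $\C_G^0$, the paper de-equivariantizes \emph{once}, by a maximal Tannakian subcategory, lands in the weakly anisotropic case, and applies Natale's theorem there; the condensation is never reversed, and the minimal extension is exhibited inside $\Y(\C_G)$. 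To repair your induction you would have to replace step (b) by precisely this Witt-invariance argument --- at which point the prime-by-prime induction becomes unnecessary.
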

In fact, let  $\C$ be a braided  fusion category such that $\C'=\E$. In Theorem \ref{theo1sec3}, we show $\C$ admits a minimal extension if and only if $\C$ is Witt equivalent to $\E\boxtimes\D$ for some non-degenerate fusion category $\D$, the precise definition of Witt equivalence can be found in section \ref{section2}.

Our second main result is  to give a sufficient condition when a  non-degenerate fusion category of Frobenius-Perron dimension $nd$ contains a pointed non-degenerate fusion subcategory of Frobenius-Perron dimension $d$, where $n$ is a positive integer  and $d$   a square-free integer such that $(n,d)=1$. Explicitly,
\begin{theo}[Theorem \ref{theo2sec4}]\label{maintheorem2} Assume that $\C$ is a  non-degenerate fusion category with $\FPdim(\C)=nd$, if $\C$ contains a Tannakian subcategory $\E=\Rep(G)$ satisfying  $\C_G^0\cong\C(\Z_d,q)\boxtimes\A$, moreover for any object $V\in\Q(\C)$, $(\FPdim(V)^2,d)=1$. Then $\C\cong\C(\Z_d,q)\boxtimes\C(\Z_d,q)_\C'$, where $\C(\Z_d,q)$ is the non-degenerate fusion category determined by metric group $(\Z_d,q)$, and $\C_G^0$ is the de-equivariantization of $\E_\C'$ by $\E$.
\end{theo}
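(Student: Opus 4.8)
The plan is to realize $\C(\Z_d,q)$ as a genuine non-degenerate fusion subcategory of $\C$ and then invoke M\"uger's decomposition theorem, which splits off any non-degenerate subcategory as a Deligne factor, giving $\C\cong\C(\Z_d,q)\boxtimes\C(\Z_d,q)'_\C$. First I would set up the centralizer $\D:=\E'_\C$. Since $\E=\Rep(G)$ is Tannakian, hence symmetric, we have $\E\subseteq\D$, and the double-centralizer theorem in the non-degenerate category $\C$ shows that $\E$ is exactly the M\"uger center of $\D$; moreover local $\mathcal{O}(G)$-modules have underlying object centralizing $\E$, so $\C_G^0$ is identified with the de-equivariantization $\D_G$. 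Counting dimensions via M\"uger's formula, $\FPdim(\D)=\FPdim(\C)/|G|=nd/|G|$ and $\FPdim(\C_G^0)=nd/|G|^2$. Since $\C(\Z_d,q)\subseteq\C_G^0$ forces $d\mid nd/|G|^2$, and $(n,d)=1$ with $d$ square-free, we obtain $|G|^2\mid n$ and in particular $(|G|,d)=1$.

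Next I would isolate $\C(\Z_d,q)$ canonically inside $\C_G^0=\C(\Z_d,q)\boxtimes\A$. Because $\FPdim(\A)=n/|G|^2$ is coprime to $d$, the group $\Z_d=G(\C(\Z_d,q))$ is precisely the subgroup of $d$-torsion invertible objects of $\C_G^0$, hence characteristic; thus $\C(\Z_d,q)$ is the fusion subcategory generated by the $d$-torsion invertibles and is stable under the canonical $G$-action on the de-equivariantization $\D_G$. The $G$-action therefore restricts to an action on the metric group $(\Z_d,q)$, i.e. to a homomorphism $G\to O(\Z_d,q)$. Since $d=\prod_i p_i$ is square-free, $O(\Z_d,q)=\prod_i O(\Z_{p_i},q_i)$, and for each odd $p_i$ an orthogonal automorphism must square to the identity, so $O(\Z_{p_i},q_i)=\{\pm1\}$, while the $p_i=2$ component is trivial; hence the action factors through sign changes on the odd components.

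The crux is to show this action is trivial. Suppose some $g\in G$ acted by $-1$ on a component $\Z_p$ with $p$ odd. Here I would use that, as $\C$ is non-degenerate, the de-equivariantization $\C_G=\C_{\mathcal{O}(G)}$ is a faithfully $G$-graded $G$-crossed braided fusion category with trivial component $\C_G^0$. An element $g$ acting by $-1$ on $\Z_p$ forces the $g$-twisted sector over the $\C(\Z_p,q_p)$-part to contain a defect object of Frobenius--Perron dimension $\sqrt{p}$ (up to an integer factor), exactly as in the gauging of charge conjugation on $\C(\Z_p,q_p)$. Pulling such an object back along the equivalence $(\C_G)^G\cong\C$ produces a simple object $V\in\Q(\C)$ with $p\mid\FPdim(V)^2$, contradicting the hypothesis $(\FPdim(V)^2,d)=1$. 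Therefore $G\to O(\Z_d,q)$ is trivial.

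Finally, with the action on objects trivial and $(|G|,d)=1$ killing the remaining $\pi_1$-level part of the action, the equivariantization of $\C(\Z_d,q)\subseteq\C_G^0$ splits as $\C(\Z_d,q)\boxtimes\Rep(G)$ inside $\D$. Since the de-equivariantization functor $F\colon\D\to\D_G$ is braided and restricts to the identity on this copy of $\C(\Z_d,q)$, the corresponding subcategory of $\D\subseteq\C$ carries the metric group $(\Z_d,q)$ and is therefore non-degenerate; M\"uger's theorem then yields $\C\cong\C(\Z_d,q)\boxtimes\C(\Z_d,q)'_\C$. I expect the main obstacle to be the twisted-sector dimension estimate in the third step: making rigorous that a nontrivial sign action must produce a $\sqrt{p}$-dimensional object is precisely where the non-degeneracy of $\C$, through the $G$-crossed modular structure of $\C_G$, and the coprimality hypothesis are both indispensable.
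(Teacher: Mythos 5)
Your overall skeleton is the same as the paper's: observe that $\C(\Z_d,q)$ is characteristic in $\C_G^0$ (the $d$-torsion invertibles, since $\FPdim(\A)$ is prime to $d$), so the $G$-action restricts to a homomorphism $G\to \text{Aut}^\text{br}_\otimes(\C(\Z_d,q))\cong O(\Z_d,q)$; use the coprimality hypothesis on simple dimensions to force this action to be trivial; use $(|G|,d)=1$ (hence $H^m(G,\Z_d)=0$) to trivialize the higher data of the action so that $\C(\Z_d,q)$ survives equivariantization and embeds in $\C$ as a non-degenerate subcategory; conclude with \cite[Theorem 3.13]{DrGNO2}. Your first, second and fourth steps are correct and essentially identical to the corresponding steps of the paper's proof of Theorem \ref{theo2sec4}.

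The genuine gap is exactly the step you yourself flag as the crux. Your argument that a nontrivial action of $g$ produces a $\sqrt{p}$-dimensional object speaks of ``the $g$-twisted sector over the $\C(\Z_p,q_p)$-part,'' which presupposes that the $G$-crossed extension $\C_G$ of $\C_G^0\cong\C(\Z_d,q)\boxtimes\A$ factors as a crossed extension of $\C(\Z_d,q)$ times one of $\A$. No such factorization exists a priori: by \cite[Theorem 8.9]{ENO2} (this is the content of Lemma \ref{lemm2sec4}) two crossed extensions with the same trivial-component data still differ by a twist $\Y(\text{Vec}_G^\omega)$, $\omega\in Z^3(G,\K^*)$, and this twist cannot be apportioned to one tensor factor. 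This is precisely why the paper does not argue inside $\C_G$ at all: it constructs an \emph{abstract} $G$-crossed extension $\widetilde{\C(\Z_d,q)}$ of $\C(\Z_d,q)$ alone via ENO2 obstruction theory (the obstructions vanish because $H^3(G,\Z_d)=0$ and by \cite[Theorem 8.16]{ENO2}), applies Kirillov's rank lemma \cite[Lemma 10.7]{Kir} there to produce simples of dimension $\sqrt{t}$ with $t\mid d$, $t>1$, and then needs the whole auxiliary apparatus --- $\B=\widetilde{\C(\Z_d,q^{-1})}^G$, the $G$-equivariant \'etale algebra $A$, $\widetilde{\A}=(\C\widehat{\boxtimes}_\E\B)_A^0$, $\D=\widetilde{\A}\widehat{\boxtimes}_\E\B^\text{rev}$, and the comparison $\C\cong\D\widehat{\boxtimes}_\E\Y(\text{Vec}_G^\omega)$ --- solely to transport those objects into $\C$ with controlled dimensions, in particular to know the extra factor $s$ satisfies $s^2\in\Z$, without which the contradiction with $(\FPdim(V)^2,d)=1$ does not follow (note $\C$ is not assumed weakly integral). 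Your route could plausibly be repaired while staying inside $\C_G$: one would need to show $\text{Pic}(\C_G^0)\cong\text{Pic}(\C(\Z_d,q))\times\text{Pic}(\A)$ (both factors being characteristic and $\C_G^0$ non-degenerate, so Picard groups equal braided autoequivalence groups by \cite[Theorem 5.2]{ENO2}), deduce the bimodule factorization $(\C_G)_g\cong\M_g\boxtimes\N_g$, prove that $\M_g$ has rank equal to the number of $g$-fixed points in $\Z_d$ while carrying total FP-dimension $d$, and then run a Frobenius--Perron eigenvector argument to control dimensions of simples $m\boxtimes n$ before passing to $(\C_G)^G\cong\C$. None of that is routine, and none of it is in your proposal; as written, the decisive step is an assertion by analogy with gauging charge conjugation rather than a proof.
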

According to \cite[Theorem 3.1]{Na}, integral non-degenerate weakly group-theoretical fusion categories $\C$ of Frobenius-Perron dimension $nd$ satisfy conditions of Theorem \ref{maintheorem2}, see Theorem \ref{theo1sec4}. Besides, combined with  Theorem \ref{maintheorem1}, we can also extend the conclusion of Theorem \ref{maintheorem2} to slightly degenerate weakly group-theoretical fusion categories, see Corollary \ref{coro1sec4}.

The paper is organized  as follows. In section \ref{section2},  we recall some basic
properties of fusion categories that we  use throughout. In section \ref{section3}, we  give a partial  answer on minimal extension of slightly degenerate fusion categories  in Theorem \ref{theo2sec3}. In section \ref{section4}, we classify  non-degenerate weakly group-theoretical fusion categories of particular Frobenius-Perron dimensions in Theorem \ref{theo2sec4}. In particular, given a prime $p$ such that $(p,d)=1$, if $\C'\subseteq \text{sVec}$, then we obtain group-theoretical property of integral fusion categories $\C$ of Frobenius-Perron dimensions $p^md$ in Corollary \ref{coro2sec4}.

This paper was written during a visit of the second named author at University of Oregon supported by China Scholarship Council (Grant no.201806140143), he  appreciates the Department of Mathematics for their warm hospitality.
The work of V.~O. was partially supported by the HSE University Basic Research Program, Russian Academic Excellence Project '5-100' and by the NSF grant DMS-1702251. Z. Yu was supported by NSFC (no.12101541), NSF of Jiangsu Province (no.BK20210785), and Natural Science Foundation of Jiangsu Higher Institutions of China (no.21KJB110006). We also thank the referees for useful comments.

\section{Preliminaries}\label{section2}
In this section, we recall   some definitions and properties of fusion categories and braided fusion categories, we refer the reader to \cite{DrGNO2,EGNO,ENO1,ENO3}.  Let $\K^{*}:=\K\backslash {\{0}\}$, $\Z_r:=\Z/r$, $r\in \mathbb{N}$.
\subsection{Fusion categories}
Let $\C$ be a fusion category,  the cardinality of $\Q(\C)$ is called rank of $\C$, and denoted by $\text{rank}(\C)$. Let $\text{Gr}(\C)$ be the Grothendieck ring of $\C$. Then  there is a unique ring  homomorphism  FPdim(-)  from the Grothendieck ring $\text{Gr}(\C)$ to $\K$ such that $\text{FPdim}(X)\geq1$ is an algebraic integer for all objects $X\in\Q(\C)$
\cite[Theorem 8.6]{ENO1}, and $\text{FPdim}(X)$ is called the Frobenius-Perron dimension of object $X$. The Frobenius-Perron dimension $\text{FPdim}(\C)$ of fusion category $\C$ is defined by
\begin{align*}
\text{FPdim}(\C):=\sum_{X\in\Q(\C)}\text{FPdim}(X)^2.
\end{align*}

A simple object $X$ of fusion category $\C$ is invertible  if $\FPdim(X)=1$. Fusion category $\C$ is a pointed fusion category, if simple objects of $\C$ are all invertible.  In this case, $\C\cong\text{Vec}_G^\omega$, the category of finite-dimensional $G$-graded vector spaces of $\K$, where $G=\Q(\C)$ is a finite group induced by fusion rules of $\C$ and $\omega\in Z^3(G,\K^*)$ is a $3$-cocycle. In the following, we use $G(\C)$ to denote the group of the isomorphism classes of invertible objects of fusion category $\C$.

A fusion category $\C$ is weakly integral if $\FPdim(\C)\in\Z$; $\C$ is integral if $\FPdim(X)\in\Z$, $\forall X\in\Q(\C)$. Let  $\C_\text{ad}$ be the  adjoint fusion subcategory of $\C$, i.e., $\C_\text{ad}$  is  generated by simple objects $Y$ such that $Y\subseteq X\otimes X^*$ for some simple object $X\in\C$. If $\C$ is weakly integral then $\C_\text{ad}$ is integral \cite[Proposition 8.27]{ENO1}.

Let $G$ be a finite group, for a $G$-graded fusion category $\C=\oplus_{g\in G}\C_g$, we use the following notation to denote the Frobenius-Perron dimension of  component $\C_g$
\begin{align*}
\FPdim(\C_g):=\sum_{X\in\Q(\C_g)}\FPdim(X)^2.
 \end{align*}
 If the $G$-grading of $\C$ is faithful, that is, for any  $g\in G$, the component $\C_g\neq 0$, then
 \begin{align*}
 \FPdim(\C)=|G|\FPdim(\C_g)
 \end{align*}
 by \cite[Proposition 8.20]{ENO1}. Any fusion category  $\C$ has a faithful grading with $\C_\text{ad}$ being its trivial component, the grading is called the universal grading of $\C$. Moreover,  for any  faithful $G$-grading of $\C=\oplus_{g\in G}\C_g$, we have $\C_\text{ad}\subseteq\C_{e}$ \cite [Corollary 3.7]{GN}.

A fusion category $\C$ is said to be nilpotent if there exists  a natural number $n$ such that $\C^{(n)}=\text{Vec}$ \cite{GN}, where $\text{Vec}$ is the category of finite-dimensional vector spaces over $\K$, and
\begin{align*}
\C^{(0)}:=\C, ~\C^{(1)}:=\C_\text{ad},~ \C^{(j)}:=(\C^{(j-1)})_\text{ad}, ~j\geq1.
\end{align*} Obviously, pointed fusion categories are nilpotent, and fusion categories of prime power Frobenius-Perron dimensions are also nilpotent \cite [Theorem 8.28]{ENO1}, for example. A fusion category $\C$ is weakly group-theoretical if it is Morita equivalent to a nilpotent
fusion category $\D$; $\C$ is group-theoretical if it is Morita equivalent to a pointed  fusion category, see \cite{ENO3} for details.
\subsection{Braided fusion categories}
A braided fusion category $\C$ is a fusion category  with a braiding
\begin{align*}
c_{X,Y}:X\otimes Y \overset{\sim}{\to}Y\otimes X, ~\forall X,Y\in\C.
 \end{align*}
Two objects $X,Y\in\C$ are said to centralize each other, if
\begin{align*}
c_{Y,X}c_{X,Y}=\text{id}_{X\otimes Y}.
\end{align*}
Let $\D\subseteq\C$ be a fusion subcategory. Then the centralizer  of $\D$ in $\C$ is the fusion subcategory generated by objects of $\C$ that centralize every object  of $\D$.
 We call $\C':=\C_\C'$ the M\"{u}ger center of $\C$ \cite{Mu}.  Moreover,  braided fusion category $\C$ is a pre-modular fusion category if $\C$ is a spherical fusion category \cite{EGNO}.

A braided fusion category $\E$ is symmetric if $\E=\E'$. For any symmetric fusion category $\E$, there exists a finite group $G$ and a central element $u\in G$   such that $\E\cong\Rep(G,u)$ \cite{De}, where $\Rep(G,u)$ is the category of finite-dimensional representation of $G$, $u^2=e$ and $u$ acts as parity automorphism for any $X\in\Q(\E)$. A symmetric fusion category $\E$ is a Tannakian fusion category if $\E\cong \Rep(G)$, the braiding of $\Rep(G)$ is given by interchange of vector spaces. A Tannakian subcategory $\E\subseteq\C$ is maximal, if $\E$ is not contained in any other Tannakian subcategory of $\C$.

We say that $\C$ is a fusion category over $\E$ if there exists a braided tensor functor $F:\E\to\Y(\C)$ such that $\E$ is mapped faithfully  to  $\C$ via functor $\text{Forg}\circ F$, where $\text{Forg}:\Y(\C)\to\C$ is the forgetful functor \cite[Definition 4.16]{DrGNO2}. A fusion category $\C$ is a braided fusion category over $\E$, if $\C$ is braided and $\E\subseteq\C'$. In particular, $\C$  is a non-degenerate fusion category over $\E$ if $\C'=\E$.

Given a braided fusion category $\C$, let $\E=\Rep(G)\subseteq\C$ be a non-trivial Tannakian subcategory, then there exists a $G$-graded fusion category $\C_G$, which is called the de-equivariantization of $\C$ by $\E$, such that  $\C\cong (\C_G)^G$ as  braided fusion categories. In general, $\C_G$ is  a braided $G$-crossed fusion category,  but the trivial component $\C_G^0\cong (\E'_\C)_G$ is a braided fusion category, where $\E_\C'$ is the centralizer of $\E$ in $\C$. In particular, if $\E\subseteq \C'$, then $\C_G$ is a braided fusion category, and $(\C_G)'\cong (\C')_G$, see \cite[section 4]{DrGNO2}.

Let $\C$ be  a braided fusion category. Then $\C$ is said to be non-degenerate if its M\"{u}ger center  $\C'=\text{Vec}$. A pre-modular fusion category $\C$ is   a modular fusion category if $\C$  is non-degenerate.
It is  known that pointed non-degenerate fusion categories are in bijective correspondence with metric groups $(G,q)$, where $G$ is an abelian group, $q: G\to \K^*$ is a non-degenerate quadratic form, see \cite{DrGNO2, EGNO} for details. We use $\C(G,q)$ to denote the pointed non-degenerate fusion category determined by the metric group $(G,q)$ below.

A braided fusion category $\C$ is slightly degenerate if $\C'=\text{sVec}$, where $\text{sVec}$ is the symmetric category of finite-dimensional super vector spaces over $\K$. We say a  slightly degenerate fusion category $\C$ splits, if there exists a braided equivalence $\C\cong \text{sVec}\boxtimes\D$, where $\D$ is  a non-degenerate fusion category. Slightly degenerate pointed fusion categories split \cite[Proposition 2.6]{ENO3}, for example. $\C$ is a super-modular fusion category if $\C'=\text{sVec}$ and $\C$ is spherical. In particular, for any slightly degenerate fusion category $\C$, if $\C$ contains a Tannakian subcategory $\E=\Rep(G)$, then $\C_G^0$ is also slightly degenerate \cite[Propositon 4.30]{DrGNO2}.

\subsection{Witt equivalence and Witt group}
Let $\C$ be a braided fusion category, $(A,m)\in\C$   a commutative algebra, where $m:A\otimes A\to A$ is the multiplication such that $m=m\circ c$.   $A$ is a connected \'{e}tale algebra if $\text{Hom}_\C(I,A)\cong\K$ and  the category of right $A$-modules $\C_A$ in $\C$ is a semisimple category \cite[Proposition 2.7, Definition 3.1]{DMNO}. In addition, we say two  \'{e}tale algebras $A,B\in\C$ are centralizing each other if $c_{B,A}\circ c_{A,B}=\text{id}_{A\otimes B}$.

Let $\E$ be a symmetric fusion category. By \cite[Proposition 3.22]{DMNO},  the  symmetric category $\E\boxtimes\E$ contains a connected \'{e}tale algebra $A$ such that $(\E\boxtimes\E)_A\cong\E$ as braided fusion category.  In fact, the \'{e}tale algebra \begin{align*}
A=\I(I):=\oplus_{X\in\Q(\E)}X\boxtimes X^*,
\end{align*}
where  $\I$ is the right adjoint functor of the surjective braided tensor functor
\begin{align*}
\otimes:\E\boxtimes\E\to\E, ~X\boxtimes Y\mapsto X\otimes Y, ~\forall X,Y\in\E.
 \end{align*}Then for any braided fusion categories $\C,\D$ over $\E$,
\begin{align*}
\C\boxtimes_\E\D:=(\C\boxtimes\D)_A,
\end{align*}
  is also a braided fusion category over $\E$, where $(\C\boxtimes\D)_A$ is the category of right $A$-modules of $A$ in $\C\boxtimes\D$.

Recall that two non-degenerate fusion categories $\C,\D$ over $\E$ are Witt equivalent if and only if there exists a fusion category $\A$  over $\E$  such that there is a braided equivalence
\begin{align*}
\C\boxtimes_\E\D^\text{rev}\cong\Y(\A,\E),
 \end{align*}
 where    $\Y(\A,\E)$ is   the centralizer  of $\E$ in Drinfeld center $\Y(\A)$, $\D^\text{rev}:=\D$ as fusion category, the braiding
 $c^\text{rev}$ of $\D^\text{rev}$ is given by
 \begin{align*}
 c^\text{rev}_{X,Y}:=c_{Y,X}^{-1}, ~\forall X,Y\in\D^\text{rev}.
  \end{align*}The Witt equivalence class of $\C$ will be denoted by $[\C]$, and the Witt equivalence classes of non-degenerate fusion categories   over $\E$ form  the Witt   group $\W(\E)$, we denote $\W:=\W(\text{Vec})$ and $s\W:=\W(\text{sVec})$, see \cite{DMNO,DNO} for detail.

Given a braided fusion category $\C$ with $\C'\cong\E$,   M. M\"{u}ger   conjectured that there is a non-degenerate fusion category $\A$ such that $\C\subseteq\A$ as braided fusion subcategory and $\C_\A'\cong\E$ \cite[Conjecture 5.2]{Mu}, and $\A$ is called a minimal extension of $\C$. Obviously, if a slightly degenerate category $\C$ splits, then it has a minimal  extension. This fails for some non-trivial Tannakian fusion category $\E$ by V. Drinfeld \cite{Drinfeld}, however.
Let $\C$ be a  super-modular fusion category, assume that $\C$ admits a minimal modular extension $\A$, i.e., the minimal extension $\A$ of $\C$ is a modular fusion category, then it was showed in \cite{BGHNPRW,LKW} that   $\A$ has exactly 16  modular equivalence classes, and these minimal modular extensions are also Witt inequivalent \cite{DNO}.

\section{On the minimal   extension conjecture}\label{section3}
In this section,   we  study the minimal extensions for braided fusion categories, and we show that every slightly degenerate weakly group-theoretical fusion category admits a minimal extension.

To begin with, we need the following lemma, which  is a direct result of
\cite[Corollary 3.26]{DMNO}.
\begin{lemm}\label{lemm1sec3}Let $\C$ be a non-degenerate fusion category over $\E$, and $\D$   a  braided fusion category. Then we have a braided equivalence $\C\boxtimes\D\cong\C\boxtimes_\E (\E\boxtimes\D)$.
\end{lemm}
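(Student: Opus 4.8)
The plan is to unwind the definition of the relative Deligne product and reduce to the case in which the second factor is $\E$ itself. First I would record the $\E$-structure on $\E\boxtimes\D$ that makes the right-hand side meaningful: since $\E$ is symmetric we have $(\E\boxtimes\D)'=\E'\boxtimes\D'=\E\boxtimes\D'$, so the embedding $X\mapsto X\boxtimes I$ realizes $\E\boxtimes I\subseteq(\E\boxtimes\D)'$. Thus $\E\boxtimes\D$ is a braided fusion category over $\E$ (via the first tensor factor only), and, $\C$ being over $\E$ by hypothesis, the relative product $\C\boxtimes_\E(\E\boxtimes\D)$ is defined.

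Next I would identify the connecting \'etale algebra. By definition $\C\boxtimes_\E(\E\boxtimes\D)=(\C\boxtimes\E\boxtimes\D)_A$, where $A=\oplus_{X\in\Q(\E)}X\boxtimes X^*$ is pushed into $\C\boxtimes(\E\boxtimes\D)$ through the two structure embeddings $\E\to\C$ and $\E\to\E\boxtimes\D$. The crucial observation is that the second embedding lands in $\E\boxtimes I$, so the image of $A$ is
\[
A\cong\Big(\bigoplus_{X\in\Q(\E)}X\boxtimes X^*\Big)\boxtimes I,
\]
that is, $A=A_0\boxtimes I$ is supported entirely in the $\C\boxtimes\E$ factor and trivial along $\D$, where $A_0=\oplus_{X}X\boxtimes X^*\in\C\boxtimes\E$ is exactly the canonical algebra defining $\C\boxtimes_\E\E$.

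The main step is then to distribute the module category across the Deligne product. Because $A=A_0\boxtimes I$ meets only the first two factors, a right $A$-module in $\C\boxtimes\E\boxtimes\D$ is a right $A_0$-module in $\C\boxtimes\E$ tensored with an arbitrary object of $\D$, giving a braided equivalence $(\C\boxtimes\E\boxtimes\D)_A\cong(\C\boxtimes\E)_{A_0}\boxtimes\D$. Since $(\C\boxtimes\E)_{A_0}=\C\boxtimes_\E\E$, and $\E$ is a unit for the relative product over $\E$ (this is the content of \cite[Corollary 3.26]{DMNO}, whence $\C\boxtimes_\E\E\cong\C$), assembling the identifications yields $\C\boxtimes_\E(\E\boxtimes\D)\cong\C\boxtimes\D$.

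I expect the main obstacle to be the distributivity step: one must verify that $(\C\boxtimes\E\boxtimes\D)_{A_0\boxtimes I}\cong(\C\boxtimes\E)_{A_0}\boxtimes\D$ is an equivalence of \emph{braided} fusion categories, not merely of abelian ones, and that under it the $A$-action matches the relative-product structure carried by $\C\boxtimes_\E\E$ on one side and by $\D$ on the other. As a consistency check I would compare Frobenius--Perron dimensions: since $\FPdim(A)=\sum_{X\in\Q(\E)}\FPdim(X)^2=\FPdim(\E)$, one gets $\FPdim\big(\C\boxtimes_\E(\E\boxtimes\D)\big)=\FPdim(\C)\FPdim(\E)\FPdim(\D)/\FPdim(\E)=\FPdim(\C)\FPdim(\D)$, matching $\FPdim(\C\boxtimes\D)$ and confirming the bookkeeping.
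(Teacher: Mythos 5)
Your proof is correct, but it takes a genuinely different route from the paper's. The paper works with a single explicit functor: the composite of the embedding $\iota:\C\boxtimes\D\to\C\boxtimes\E\boxtimes\D$, $X\boxtimes Y\mapsto X\boxtimes I\boxtimes Y$, with the free module functor to $(\C\boxtimes\E\boxtimes\D)_A$; this composite is fully faithful by \cite[Proposition 3.4]{LKW} because $(\C\boxtimes\D)\cap A\cong I$, and it is then an equivalence by \cite[Proposition 6.3.3]{EGNO} since both sides have Frobenius--Perron dimension $\FPdim(\C)\FPdim(\D)$. You instead decompose the target: writing $A=A_0\boxtimes I$ with $A_0\in\C\boxtimes\E$ the canonical algebra, you factor $(\C\boxtimes\E\boxtimes\D)_A\cong(\C\boxtimes\E)_{A_0}\boxtimes\D$ and then appeal to the unit property $\C\boxtimes_\E\E\cong\C$. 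Both routes are sound, and your observation that $A$ is supported in $\C\boxtimes\E\boxtimes I$ is the same fact that drives the paper's intersection condition. What the paper's route buys is economy: given the two cited lemmas, only the dimension count remains. What your route buys is structure: the lemma becomes ``distributivity plus unit'' for $\boxtimes_\E$. The trade-off is that your two deferred ingredients carry essentially the full weight of the lemma: the braided distributivity $(\M\boxtimes\D)_{A_0\boxtimes I}\cong\M_{A_0}\boxtimes\D$ (which you rightly flag as the main obstacle) is true but requires either the results on \'{e}tale algebras in Deligne products from \cite{DMNO} or a direct verification, and the unit equivalence $\C\boxtimes_\E\E\cong\C$, if proved rather than cited, is exactly the paper's argument in the special case $\D=\text{Vec}$ (full faithfulness from $(\C\boxtimes I)\cap A_0\cong I$ plus a dimension count). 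One small imprecision: your phrase ``a right $A$-module is a right $A_0$-module tensored with an arbitrary object of $\D$'' is literally true only for product modules --- a general module is a direct sum (or summand of a free module) of such --- but the categorical equivalence you assert is correct, so this is a matter of wording rather than a gap.
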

\begin{proof}Notice that we have an injective braided tensor functor
\begin{align*}
\iota:\C\boxtimes\D\to \C\boxtimes (\E\boxtimes\D),~X\boxtimes Y\mapsto X\boxtimes I\boxtimes Y,
 \end{align*}
 and a surjective braided tensor functor $F:\C\boxtimes (\E\boxtimes\D)\to\C\boxtimes_\E(\E\boxtimes\D)$. Then the composition $F\circ\iota:\C\boxtimes\D\to\C\boxtimes_\E (\E\boxtimes\D)$ is a braided tensor functor. By definition, we   have
 \begin{align*}
 \C\boxtimes_\E(\E\boxtimes\D)=(\C\boxtimes\E\boxtimes\D)_A
  \end{align*}
  where $A=\I(I)=\oplus_{X\in\Q(\E)}X\boxtimes X^*\boxtimes I$, and $\I$ is the right adjoint functor of tensor functor $\otimes:\E\boxtimes\E\to\E$. Since $(\C\boxtimes\D)\cap A\cong I$,  $F\circ\iota$ is injective by \cite[Proposition 3.4]{LKW}. Hence, $F\circ\iota$  is a braided equivalence by \cite[Proposition 6.3.3]{EGNO}, as we have the following equations  \begin{align*}
\FPdim(\C\boxtimes\D)=\FPdim(\C\boxtimes_\E (\E\boxtimes\D))=\FPdim(\C)\FPdim(\D),
\end{align*}
this finishes the proof of the lemma.
\end{proof}

For any connected \'{e}tale algebra $A\in\C$, a right $A$-module $(M,\mu)$ is a dyslectic (or, local) $A$-module, if $\mu\circ c_{A,M}\circ c_{M,A}=\mu$, where $\mu:M\otimes A\to M$ is the $A$-module structure of $M$. We use $\C_A^0$ to denote the category of dyslectic  modules of $A$ \cite{DMNO}.
\begin{theo}\label{theo1sec3}Let $\B$ and $\C$ be two Witt equivalent non-degenerate  fusion categories over a symmetric fusion category $\E$,  moreover $\B\cong\E\boxtimes\D$ where $\D$ is non-degenerate. Then $\C$ admits a minimal extension $\A$. In addition, $[\C]=[\E\boxtimes\A]$.
\end{theo}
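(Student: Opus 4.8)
The plan is to realize $\C$ inside a Drinfeld center by unwinding the Witt equivalence, and then to peel off the non-degenerate factor $\D$ so as to land on a minimal extension of $\C$. First I would use the definition of Witt equivalence over $\E$: since $\B$ and $\C$ are Witt equivalent over $\E$, there is a fusion category $\mathcal{F}$ over $\E$ with $\C\boxtimes_\E\B^\text{rev}\cong\Y(\mathcal{F},\E)$. Because $\B\cong\E\boxtimes\D$ and $\E$ is symmetric, $\B^\text{rev}\cong\E\boxtimes\D^\text{rev}$, so Lemma \ref{lemm1sec3} gives $\C\boxtimes_\E(\E\boxtimes\D^\text{rev})\cong\C\boxtimes\D^\text{rev}$. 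Hence $\C\boxtimes\D^\text{rev}\cong\Y(\mathcal{F},\E)$ sits as the centralizer of $\E$ inside the non-degenerate category $\M:=\Y(\mathcal{F})$. Double centralization in $\M$ then yields $(\C\boxtimes\D^\text{rev})'_\M=(\E'_\M)'_\M=\E$, and counting Frobenius--Perron dimensions gives $\FPdim(\M)=\FPdim(\E)\FPdim(\C)\FPdim(\D)$.

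Next I would extract $\A$ by splitting. Transported into $\M$ via the above equivalence, the tensor factor $\mathbf{1}\boxtimes\D^\text{rev}$ is a non-degenerate braided subcategory of $\M$, since the Müger center of $\D^\text{rev}$ is $\text{Vec}$. By the splitting of a non-degenerate subcategory off its centralizer \cite{DrGNO2}, $\M\cong\D^\text{rev}\boxtimes\A$, where $\A:=(\D^\text{rev})'_\M$ is again non-degenerate. Inside $\C\boxtimes\D^\text{rev}$ the subcategory $\C$ centralizes $\D^\text{rev}$, and this centralizing relation persists in $\M$, so $\C\subseteq\A$. A dimension count gives $\FPdim(\A)=\FPdim(\M)/\FPdim(\D)=\FPdim(\C)\FPdim(\E)$, whence $\FPdim(\C'_\A)=\FPdim(\A)/\FPdim(\C)=\FPdim(\E)$ by non-degeneracy of $\A$. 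Since $\E=\C'$ lies in $\C\subseteq\A$ and centralizes $\C$, we have $\E\subseteq\C'_\A$, and comparing dimensions forces $\C'_\A=\E$. Thus $\A$ is a minimal extension of $\C$.

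For the Witt-class assertion I would read $[\A]$ off the splitting: as $\M=\Y(\mathcal{F})$ is a Drinfeld center, $[\M]=0$ in $\W$, so $0=[\D^\text{rev}]+[\A]$ and therefore $[\A]=-[\D^\text{rev}]=[\D]$. Applying the induction homomorphism $\W\to\W(\E)$, $[\N]\mapsto[\E\boxtimes\N]$, which is well defined by \cite{DNO} and compatible with Lemma \ref{lemm1sec3}, I obtain $[\E\boxtimes\A]=[\E\boxtimes\D]=[\B]=[\C]$, as claimed. (If one prefers to avoid quoting the induction map, the same conclusion follows directly: writing $[\A]=[\D]$ as $\A\boxtimes\D^\text{rev}\cong\Y(\mathcal{G})$ and using Lemma \ref{lemm1sec3}, one checks $(\E\boxtimes\A)\boxtimes_\E(\E\boxtimes\D)^\text{rev}\cong\E\boxtimes\Y(\mathcal{G})\cong\Y(\E\boxtimes\mathcal{G},\E)$, which is Witt trivial over $\E$.)

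The main obstacle I anticipate is the splitting step: to invoke $\M\cong\D^\text{rev}\boxtimes\A$ I must be sure that the copy of $\D^\text{rev}$ arising from the abstract equivalence $\C\boxtimes\D^\text{rev}\cong\Y(\mathcal{F},\E)$ is genuinely a non-degenerate braided subcategory of $\M$, and that centralizers computed in $\M$ restrict correctly to those in $\C\boxtimes\D^\text{rev}$, so that the embedding $\C\hookrightarrow\A$ and the inclusion $\E\subseteq\C'_\A$ are both pinned down. Once these structural points are secured, the Frobenius--Perron bookkeeping closes both the minimality of $\A$ and the identity $[\C]=[\E\boxtimes\A]$.
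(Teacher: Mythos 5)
Your proposal is correct, and its first half is essentially the paper's own argument: both unwind the Witt equivalence to $\C\boxtimes\D^\text{rev}\cong\Y(\mathcal{F},\E)$ via Lemma \ref{lemm1sec3}, split the center as $\Y(\mathcal{F})\cong\D^\text{rev}\boxtimes\A$ with $\A=(\D^\text{rev})'_{\Y(\mathcal{F})}$ by \cite[Theorem 3.13]{DrGNO2}, and deduce $\C\subseteq\A$ because centralizing relations persist under a full braided embedding; you are in fact slightly more complete here, since the paper stops at $\C\subseteq\A$ and $\FPdim(\A)=\FPdim(\E)\FPdim(\C)$, leaving the final dimension count $\C'_\A=\E$ implicit, which you write out. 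Where you genuinely diverge is the claim $[\C]=[\E\boxtimes\A]$. The paper proves it by an explicit construction: with $\M=\E\boxtimes\A$ and the \'etale algebra $A=\I(I)$ satisfying $(\E\boxtimes\E)_A\cong\E$, it invokes \cite[Corollary 4.6]{DNO} to get $\M\boxtimes_\E(\M_A^0)^\text{rev}\cong\Y(\M_A,\E)$ and then identifies $\M_A^0\cong\C$ by an injectivity-plus-dimension argument (\cite[Proposition 3.4]{LKW}, \cite[Proposition 6.3.3]{EGNO}). You instead compute in the Witt group itself: $[\Y(\mathcal{F})]=0$ forces $[\A]=[\D]$ in $\W$, and pushing through the induction homomorphism $S_\E:\W\to\W(\E)$ gives $[\E\boxtimes\A]=[\E\boxtimes\D]=[\B]=[\C]$. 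This is shorter and more structural, at the cost of relying on well-definedness of $S_\E$ (from \cite{DNO}; note the paper only introduces $S_\E$ \emph{after} this theorem) --- a dependence your parenthetical removes, and that verification is sound: it needs only Lemma \ref{lemm1sec3} together with the fact that the canonical copy of $\E$ equals its own centralizer in $\Y(\E)$, which follows from symmetry of $\E$ plus a dimension count. What the paper's heavier route buys is a concrete witness of the Witt equivalence (the category $\M_A$ and the identification $\C\cong\M_A^0$ inside $\Y(\M_A)$), which supports the later remark that the minimal extension of a slightly degenerate weakly group-theoretical category sits inside $\Y(\C_G)$ and inherits properties like weak group-theoreticity and integrality; your route buys economy and makes clear that the Witt-class assertion is pure bookkeeping in $\W$ and $\W(\E)$.
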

\begin{proof} By assumption,  there exists a fusion category $\A_1$  over $\E$ such that there is a braided equivalence $\C\boxtimes_\E\B^\text{rev}\cong\Y(\A_1,\E)$. On the one hand, by Lemma \ref{lemm1sec3}, we have the following braided equivalences
\begin{align*}
\C\boxtimes\D^\text{rev}\cong\C\boxtimes_\E\B^\text{rev}\cong \Y(\A_1, \E).
 \end{align*}
 On the other hand,  \cite[Theorem 3.13]{DrGNO2} says that $\Y(\A_1)\cong\D^\text{rev}\boxtimes\A$, where $\A$ is the  centralizer of $\D^\text{rev}$ in $\Y(\A_1)$, and then $\A$ is also non-degenerate. By \cite[Theorem 3.10]{DrGNO2}, \begin{align*}
 \FPdim(\Y(\A_1))=\FPdim(\E)\FPdim(\Y(\A_1, \E)),
 \end{align*}
  hence we have equations
\begin{align*}
\FPdim(\A)=\frac{\FPdim(\Y(\A_1))}{\FPdim(\D)}=\frac{\FPdim(\E)\FPdim(\Y(\A_1, \E))}{\FPdim(\D)}=\FPdim(\E)\FPdim(\C).
\end{align*}

We claim that $\A$ is a minimal   extension of $\C$. Note that we have an injective braided tensor functor from $\C\boxtimes \D^\text{rev}$ to $\Y(\A_1)$,  by definition $\C$ and $\D^\text{rev}$ are centralizing each other in braided fusion category $\C\boxtimes \D^\text{rev}$, hence they are still centralizing each other in  $\Y(\A_1)$. Therefore, we have an inclusion $\D^\text{rev}\subseteq\C'_{\Y(\A_1)}$, where $\C'_{\Y(\A_1)}$ is the centralizer of $\C$ in $\Y(\A_1)$. Since the centralizer of $\D^\text{rev}$ in $\Y(\A_1)$ is $\A$, by \cite[Corollary 3.11]{DrGNO2} $\C \subseteq\A$, as claimed.

Let $A$ be the connected \'{e}tale algebra such that $(\E\boxtimes\E)_A\cong\E$. Denote $\M:=\E\boxtimes\A$. Since $A\cap\E=I$,  $\M_A^0$ is also non-degenerate over $\E$ by \cite[Corollary 4.6]{DNO}, and we have the following braided equivalence
\begin{align*}
\M\boxtimes_\E(\M_A^0)^\text{rev}\cong\Y(\M_A,\E).
\end{align*}
In particular, $\FPdim(\M_A^0)=\frac{\FPdim(\M)}{\FPdim(\E)^2}=\FPdim(\C)$. Notice that $\C\cong\text{Vec}\boxtimes\C$ as a fusion subcategory of $\M$ and $\C\cap A=I$, then there is an injective braided tensor functor $F:\C\to\M_A^0$ \cite[Proposition 3.4]{LKW},  which must be an equivalence by \cite[Proposition 6.3.3]{EGNO}. That is,
\begin{align*}
\M\boxtimes_\E\C^\text{rev}\cong\Y(\M_A,\E),
\end{align*}
 so $[\E\boxtimes\A]=[\C]$ by definition, this finishes the proof.
\end{proof}

\begin{remk}
Let $\C$ and $\D$ be Witt equivalent  braided  fusion categories. If $\C$ has a minimal  extension $\A$, then Theorem \ref{theo1sec3} implies that $\D$ is also Witt equivalent to $\E\boxtimes\A$.   Hence, $\D$ has a minimal extension. Consequently, the property whether a braided fusion category admits a minimal extension is a Witt invariant.

In addition, if $\C$ is a pre-modular fusion category, we don't know whether its minimal extension $\A$ admits a spherical structure, however.
\end{remk}

Therefore, to prove minimal  extension conjecture for non-degenerate fusion categories over a symmetric fusion category $\E$ is equivalent   to show that the group homomorphism $S_\E:\W\to \W(\E)$ is surjective, where
\begin{align*}
S_\E:[\A]\mapsto[\E\boxtimes\A],~ \text{for all}~[\A]\in\W,
 \end{align*}
 $[\A]$ is the Witt equivalence class of non-degenerate fusion category $\A$, see also \cite[Question 5.15]{DNO}. When $\E=\text{sVec}$, it is well-known $\text{ker}(S_\text{sVec})=\W_\text{Ising}\cong\Z_{16}$ \cite[Proposition 5.14]{DNO}, where $\W_\text{Ising}\subseteq\W$ is the subgroup generated by the Witt equivalence class of Ising fusion category.

Assume that $\E=\Rep(G)$ is a Tannakian fusion category, then for any non-degenerate fusion category $\C$ over $\E$, $\C_G$ is a non-degenerate fusion category \cite[Proposition 4.56]{DrGNO2}. Thus,   there is a well-defined group homomorphism $\phi_\E$ between Witt groups $\W(\E)$ and $\W$, where
\begin{align*}
\phi_\E:\W(\E)\to\W, ~[\C]\mapsto[\C_G],~ \text{for all}~ [\C]\in\W(\E),
\end{align*}
see \cite[Remark 5.8]{DNO}. For any non-degenerate fusion category $\A$, note that $(\E\boxtimes\A)_G\cong\A$ as braided fusion category, so $\phi_\E(S_\E([\A]))=[\A]$. Therefore,
\begin{coro}Let $\E=\Rep(G)$ be a Tannakian fusion category. Then $\phi_\E\circ S_\E=\text{id}$, so there exists a split exact sequence
\begin{align*}
1\to \text{ker}(\phi_\E)\overset{i}{\to} \W(\E)\underset{S_\E}{\overset{\phi_\E}{\rightleftarrows} }\W\to 1.
\end{align*}
\end{coro}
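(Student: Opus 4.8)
The plan is to establish the two facts the corollary asserts: first that $\phi_\E\circ S_\E=\mathrm{id}$, and second that this identity forces the stated split exact sequence. Since $S_\E$ and $\phi_\E$ are both group homomorphisms between the Witt groups $\W$ and $\W(\E)$ (as recalled in the excerpt just before the statement), the composition $\phi_\E\circ S_\E$ is an endomorphism of $\W$, and it suffices to verify that it acts as the identity on an arbitrary Witt class $[\A]\in\W$, where $\A$ is a non-degenerate fusion category over $\mathrm{Vec}$.

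First I would unwind the definitions. By construction $S_\E([\A])=[\E\boxtimes\A]$, so applying $\phi_\E$ gives $\phi_\E(S_\E([\A]))=[(\E\boxtimes\A)_G]$, the Witt class of the de-equivariantization of $\E\boxtimes\A$ by $G$. The crucial computational input is the braided equivalence $(\E\boxtimes\A)_G\cong\A$, which is precisely the remark recorded in the sentence immediately preceding the corollary. Intuitively, de-equivariantizing the factor $\E=\Rep(G)$ by $G$ returns $\mathrm{Vec}$, leaving $\A$ untouched; one verifies this by noting that the free module functor and the identification of $\Rep(G)$-modules in $\E\boxtimes\A$ with objects of $\A$ respects the braidings. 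Granting this equivalence, $\phi_\E(S_\E([\A]))=[\A]$, so $\phi_\E\circ S_\E=\mathrm{id}$ on all of $\W$.

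Having established that $\phi_\E\circ S_\E=\mathrm{id}$, the split exact sequence is a formal consequence of elementary group theory applied to the abelian Witt groups. The identity $\phi_\E\circ S_\E=\mathrm{id}$ shows that $S_\E$ is a section (right inverse) of the surjection $\phi_\E$, which immediately implies $\phi_\E$ is surjective and $S_\E$ is injective. Writing $i$ for the inclusion of $\ker(\phi_\E)$ into $\W(\E)$, exactness at $\W(\E)$ follows since $\mathrm{im}(i)=\ker(\phi_\E)$ by definition, and the existence of the splitting $S_\E$ with $\phi_\E\circ S_\E=\mathrm{id}$ is exactly the statement that the sequence
\begin{align*}
1\to \ker(\phi_\E)\overset{i}{\to}\W(\E)\overset{\phi_\E}{\to}\W\to 1
\end{align*}
splits, giving the displayed diagram with the two opposing arrows $\phi_\E$ and $S_\E$ between $\W(\E)$ and $\W$.

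I expect the only genuine content to lie in the braided equivalence $(\E\boxtimes\A)_G\cong\A$, and the main obstacle is confirming that this equivalence is braided rather than merely monoidal; once that is in hand, everything else is a purely formal splitting-lemma argument for abelian groups. Since the excerpt cites \cite[Remark 5.8]{DNO} for the well-definedness of $\phi_\E$ and records the equivalence $(\E\boxtimes\A)_G\cong\A$ as an established fact in the preceding paragraph, I would simply invoke these and present the proof as the short two-line verification that $\phi_\E(S_\E([\A]))=[(\E\boxtimes\A)_G]=[\A]$, followed by the observation that a right-invertible surjection of abelian groups yields a split exact sequence.
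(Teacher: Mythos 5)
Your proposal is correct and follows essentially the same route as the paper: the paper's argument is exactly the observation, recorded in the sentence preceding the corollary, that $(\E\boxtimes\A)_G\cong\A$ as braided fusion categories, whence $\phi_\E(S_\E([\A]))=[\A]$, with the split exact sequence then following formally. Your additional remark that the only real content is checking the equivalence is braided (not merely monoidal) correctly identifies where the substance lies; the paper likewise takes this as given from \cite[Remark 5.8]{DNO} and \cite[section 4]{DrGNO2}.
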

Hence $S_\E$ is injective for all Tannakian fusion categories $\E$. However, Drinfeld's counterexample \cite{Drinfeld} shows  that $S_\E$  is not surjective for some Tannakian categories $\E$, in general. In \cite{OsYu}, we will continue to consider the  structures  of   subgroup $\text{ker}(\phi_\E)$ and Witt group $s\W$.

Recall that a braided fusion category $\C$ is said to be anisotropic if  $\C$ does not contain any non-trivial Tannakian subcategory; meanwhile $\C$ is weakly anisotropic, if $\C$ does not contain a non-trivial Tannakian subcategory (if exists) that  is  invariant under all braided auto-equivalences of $\C$ \cite{DrGNO2}.
\begin{theo}\label{theo2sec3}Let $\C$ be a slightly degenerate weakly group-theoretical fusion category. Then $\C$ admits a minimal extension.
\end{theo}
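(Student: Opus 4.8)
The plan is to convert the statement into an assertion about Witt classes and then run an induction on $\FPdim(\C)$ via de-equivariantization. Write $\E=\C'=\text{sVec}$. By the Remark following Theorem~\ref{theo1sec3}, admitting a minimal extension is a Witt invariant, and by Theorem~\ref{theo1sec3} together with the discussion of the maps $S_\E$ above, a slightly degenerate $\C$ admits a minimal extension if and only if it is Witt equivalent to a split category $\text{sVec}\boxtimes\D$ with $\D$ non-degenerate, that is, if and only if $[\C]$ lies in the image of $S_{\text{sVec}}\colon\W\to s\W$. So the whole problem reduces to showing $[\C]\in\operatorname{im}(S_{\text{sVec}})$, and we are free to replace $\C$ by any Witt-equivalent slightly degenerate category. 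Note also that, being weakly group-theoretical, $\C$ is weakly integral.

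The induction is on $\FPdim(\C)$. If $\C$ is pointed, then a slightly degenerate pointed category splits as $\text{sVec}\boxtimes\D$ by \cite[Proposition 2.6]{ENO3}, so $[\C]=S_{\text{sVec}}([\D])$; this is the base case. For the inductive step assume $\C$ is not pointed. The key observation is that de-equivariantizing by a Tannakian subcategory does not change the Witt class over $\text{sVec}$: if $\E_0=\Rep(G)\subseteq\C$ is a non-trivial Tannakian subcategory, then the regular algebra $A=\text{Fun}(G)\in\E_0\subseteq\C$ is a connected \'etale algebra with $\FPdim(A)=|G|$ and $\C_A^0=\C_G^0$, and since the generator $f$ of $\text{sVec}$ has twist $-1$ it lies in no Tannakian subcategory, so $f\notin\E_0$ and $\C_G^0$ is again slightly degenerate by \cite[Proposition 4.30]{DrGNO2}. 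The defining relation of Witt equivalence (cf.\ \cite{DMNO,DNO}) then gives $[\C]=[\C_A^0]=[\C_G^0]$ in $s\W$. Moreover $\C_G^0$ is again weakly group-theoretical and $\FPdim(\C_G^0)=\FPdim(\C)/|G|^2<\FPdim(\C)$, so the inductive hypothesis yields $[\C_G^0]\in\operatorname{im}(S_{\text{sVec}})$, whence $[\C]\in\operatorname{im}(S_{\text{sVec}})$ as well.

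What remains, and is the crux of the argument, is to produce a non-trivial Tannakian subcategory $\E_0$ whenever $\C$ is not pointed. Here the plan is to use the structure theory of weakly group-theoretical fusion categories to show that a non-pointed weakly group-theoretical braided fusion category contains either a non-trivial Tannakian subcategory or an Ising subcategory $\I$. In the first case we are done; in the second case the central fermion saves us. Indeed, let $\psi$ be the non-trivial invertible object of $\I$; since the M\"uger center of $\I$ is trivial, $\psi$ is not central, so $\psi\neq f$, while $\theta_{f\psi}=\theta_f\theta_\psi=(-1)(-1)=1$ because $f$ centralizes $\psi$. Thus $f\psi$ is a non-trivial boson of order two and $\langle f\psi\rangle\cong\Rep(\Z_2)$ is a non-trivial Tannakian subcategory, as required. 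This step is exactly where the slightly degenerate hypothesis is indispensable: it is the presence of the central fermion $f$ that neutralises the Ising subcategories which, in the non-degenerate world, are responsible for the $\Z_{16}$ obstruction to surjectivity of $S_{\text{sVec}}$. I expect the main technical obstacle to be establishing this Tannakian-or-Ising dichotomy for non-pointed weakly group-theoretical braided categories, for which I would argue by a separate induction on $\FPdim$, using the weak integrality of $\C_\text{ad}$ together with the existence results for Tannakian subcategories in \cite{ENO3,DrGNO2}.
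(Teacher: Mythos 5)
Your overall frame is sound and genuinely close to the paper's: reduce via Theorem~\ref{theo1sec3} and its Remark to showing $[\C]\in\operatorname{im}(S_{\text{sVec}})$, settle the pointed case by \cite[Proposition 2.6]{ENO3}, and use the fact that de-equivariantization by a Tannakian subcategory $\Rep(G)$ not containing the fermion preserves the Witt class in $s\W$ (this is \cite[Corollary 4.6]{DNO}, the same fact the paper invokes). Your Ising-plus-fermion trick is also correct as stated: an Ising subcategory $\I$ of a slightly degenerate $\C$ cannot contain the central fermion $f$, the object $f\psi$ is a boson of order two, and $\langle f\psi\rangle\cong\Rep(\Z_2)$ is Tannakian. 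It is worth noting, though, that the paper never needs this trick: in the weakly anisotropic case it quotes \cite[Theorem 1.1]{Na} to get $\C\cong\B\boxtimes\D$ with $\B$ trivial or Ising and $\D$ pointed, and then $\D\cong\text{sVec}\boxtimes\D_0$ splits, so $\C\cong\text{sVec}\boxtimes(\B\boxtimes\D_0)$ splits outright --- the Ising factor simply survives into the non-degenerate complement and there is no need to de-equivariantize it away; consequently the paper needs no induction at all, just one de-equivariantization by a maximal Tannakian subcategory (using \cite[Corollary 5.19]{DrGNO2}).

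The genuine gap is precisely the step you flag as the crux: the ``Tannakian-or-Ising dichotomy'' for non-pointed weakly group-theoretical braided fusion categories. This is not a technical lemma one can expect to obtain by ``a separate induction on $\FPdim$ using weak integrality of $\C_{\text{ad}}$'' together with generic existence results from \cite{ENO3,DrGNO2}; it is essentially equivalent to Natale's theorem \cite[Theorem 1.1]{Na}, the key external input of the paper's proof. Indeed, if $\C$ contains no non-trivial Tannakian subcategory then it is anisotropic, hence weakly anisotropic, and the assertion that such a weakly group-theoretical $\C$ is a Deligne product of an Ising or trivial category with a pointed one is exactly Natale's result --- whose proof is a substantial analysis of the core of weakly group-theoretical braided categories, not an elementary induction. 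Conversely, your dichotomy implies that statement. So as written your argument is incomplete at exactly the point where all of the difficulty is concentrated; it becomes a complete and correct proof the moment you replace the sketched dichotomy by a citation of \cite[Theorem 1.1]{Na} (applied either to $\C$ itself when anisotropic, or, as in the paper, to the weakly anisotropic de-equivariantization $\C_G^0$ furnished by \cite[Corollary 5.19]{DrGNO2}), at which point your induction also collapses to the paper's single-step argument.
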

\begin{proof} By Theorem \ref{theo1sec3}, it suffices to show that $\C$ is Witt equivalent to a split   slightly degenerate fusion category. If $\C$ is   pointed, then \cite[Proposition 2.6]{ENO3} means  $\C\cong\text{sVec}\boxtimes\D$, where $\D$ is a pointed non-degenerate fusion category. Assume  $\C$ is not a  pointed fusion category below.

On the one hand, if $\C$ is weakly anisotropic, then $\C$ is braided equivalent to $\B\boxtimes\D$ by \cite[Theorem 1.1]{Na}, where $\B\cong \text{Vec}$  or $\B$ is equivalent to an Ising category $\I$, $\D$ is a pointed fusion  category. So $\B$ is non-degenerate and $\D$ is slightly degenerate. While \cite[Proposition 2.6]{ENO3} implies  $\D\cong \text{sVec}\boxtimes\D_0$, where $\D_0$ is a  pointed non-degenerate fusion category, therefore $\C \cong \text{sVec}\boxtimes(\B\boxtimes\D_0)$. That is,  $\C$ splits and admits a minimal extension.

On the other hand, if $\C$ is not  a weakly anisotropic fusion category, then  $\C$ contains a maximal Tannakian subcategory $\Rep(G)$ such that $\C_G^0$ is a  slightly degenerate weakly anisotropic fusion category by
\cite[Corollary 5.19]{DrGNO2}. Meanwhile, we have a braided fusion categories tensor equivalence $\C\boxtimes_{\text{sVec}}(\C_G^0)^\text{rev}\cong\Y(\C_G, \text{sVec})$ by \cite[Corollary 4.6] {DNO}. Thus, $\C$ and $\C_G^0$ are Witt equivalent by definition, consequently $\C$ admits a minimal extension by Theorem \ref{theo1sec3}.
\end{proof}

\begin{remk}Let $\D$ be a slightly degenerate fusion category. If  $\D$ is Witt equivalent to a weakly group-theoretical fusion category, then Theorem \ref{theo1sec3} and Theorem \ref{theo2sec3} together show   $\D$ admits a minimal extension.

In addition,  the proof of  Theorem \ref{theo1sec3} also implies that the minimal extension $\A$ in  Theorem \ref{theo2sec3} is a braided fusion subcategory of $\Y(\C_G)$. Hence $\A$   is   a weakly group-theoretical fusion category \cite[Proposition 4.1]{ENO3}. In particular, $\A$ is integral if $\C$ is integral.
\end{remk}
\section{Structure  of certain braided weakly group-theoretical fusion categories}\label{section4}
In this section, we study the structure of non-degenerate and slightly degenerate weakly group-theoretical fusion categories of particular Frobenius-Perron dimensions.

Let $G$ be a finite group. Given a faithful $G$-graded fusion category $\C=\oplus_{g\in G}\C_g$, assume    $\C_e\cong\text{Vec}_N^\omega$, where $N$ is a finite group and $\omega\in Z^3(N,\K^*)$ is a $3$-cocycle. By definition, the  $G$-grading structure of $\C$ induces  an action of group $N$ on sets $\Q(\C_g)$ for all $g\in G$.

The following lemma is trivial, we include it for the reader's interest.
\begin{lemm}\label{lemm1sec4}Let $\C=\oplus_{g\in G}\C_g$ be a faithful $G$-graded  fusion category, and $\C_e\cong\text{Vec}_N^\omega$ is pointed, where $G,N$ are finite groups and $\omega\in Z^3(N,\K^*)$ is a $3$-cocycle. Then for any element $ g\in G$,  N acts transitively on   $\Q(\C_g)$.
\end{lemm}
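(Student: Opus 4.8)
The plan is to prove transitivity by a Frobenius--Perron dimension count over the orbits of the $N$-action on each $\Q(\C_g)$. First I would make the action explicit: since the grading satisfies $\C_e\otimes\C_g\subseteq\C_g$ and every object of $\C_e\cong\text{Vec}_N^\omega$ is invertible, left tensoring by an invertible $\delta\in N=\Q(\C_e)$ sends a simple object $X\in\Q(\C_g)$ to a simple object $\delta\otimes X\in\Q(\C_g)$; this is exactly the induced action of $N$ on $\Q(\C_g)$.

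The key computation is to identify the stabilizer of a simple object with its dimension. Fix $X\in\Q(\C_g)$ and set $H=\{\delta\in N: \delta\otimes X\cong X\}$; a routine check shows $H$ is a subgroup of $N$. Since $X^*\in\Q(\C_{g^{-1}})$, the object $X\otimes X^*$ lies in $\C_e$ and is therefore a direct sum of invertibles. The multiplicity of $\delta\in N$ in $X\otimes X^*$ equals $\dim\text{Hom}(\delta\otimes X,X)$, which is $1$ when $\delta\in H$ and $0$ otherwise; hence $X\otimes X^*\cong\bigoplus_{\delta\in H}\delta$, and applying the ring homomorphism $\FPdim$ gives $\FPdim(X)^2=|H|$.

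Now I would count dimensions orbit by orbit. By orbit--stabilizer the $N$-orbit of $X$ has cardinality $|N|/|H|=|N|/\FPdim(X)^2$, and every object in this orbit has the same Frobenius--Perron dimension $\FPdim(X)$, since tensoring by an invertible preserves dimension. Consequently each $N$-orbit $O\subseteq\Q(\C_g)$ contributes $|O|\cdot\FPdim(X)^2=|N|$ to $\FPdim(\C_g)=\sum_{Y\in\Q(\C_g)}\FPdim(Y)^2$, so $\FPdim(\C_g)=m\,|N|$ where $m$ is the number of orbits. On the other hand, faithfulness of the grading gives $\FPdim(\C_g)=\FPdim(\C)/|G|=\FPdim(\C_e)=|N|$ by \cite[Proposition 8.20]{ENO1}, the last equality because $\C_e\cong\text{Vec}_N^\omega$ is pointed of rank $|N|$. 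Comparing the two expressions forces $m=1$, that is, $N$ acts transitively on $\Q(\C_g)$.

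The argument is essentially a dimension count, so I do not expect a serious obstacle; the only point requiring care is the identification $\FPdim(X)^2=|H|$, which crucially uses that $\C_e$ is pointed, so that $X\otimes X^*$ decomposes entirely into invertible objects, together with faithfulness of the grading to pin down $\FPdim(\C_g)=|N|$.
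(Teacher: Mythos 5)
Your proof is correct and follows essentially the same route as the paper's: identify $X\otimes X^*\cong\bigoplus_{\delta\in H}\delta$ with $H$ the stabilizer subgroup so that $\FPdim(X)^2=|H|$, apply orbit--stabilizer, and compare with $\FPdim(\C_g)=|N|$ from \cite[Proposition 8.20]{ENO1}. Your write-up is in fact slightly more detailed than the paper's (it justifies the stabilizer identification via Hom-space multiplicities and counts over all orbits rather than just one), but the underlying argument is identical.
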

\begin{proof} By definition, for any $g\in G$, let $X\in\C_g$ be an arbitrary simple object, there exists  a subgroup $H$ of $N$ such that $X\otimes X^*=\oplus_{h\in H}h$, then $\FPdim(X)^2=|H|$. Note that the orbit of $X$  has a size of $[N:H]$,  by \cite[Proposition 8.20]{ENO1}
\begin{align*}
\FPdim(\C_g)=\FPdim(\C_e)=|N|=[N:H]\FPdim(X)^2,
  \end{align*}
  which then shows   the orbit of $X$ is exactly  $\Q(\C_g)$.
\end{proof}
Now we are ready to give our first classification theorem of non-degenerate fusion categories.
\begin{theo}\label{theo1sec4}Let  d be a square-free positive integer. Assume that $\C$ is  an integral non-degenerate weakly group-theoretical fusion category  such that  $\FPdim(\C)=nd$ and $(n,d)=1$. Then $\C\cong \C(\Z_d,q)\boxtimes \C(\Z_d,q)_\C'$ as braided fusion category.
\end{theo}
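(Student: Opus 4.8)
The plan is to deduce Theorem \ref{theo1sec4} from Theorem \ref{theo2sec4}: under the stated hypotheses I will verify that $\C$ contains a Tannakian subcategory $\E=\Rep(G)$ with $\C_G^0\cong\C(\Z_d,q)\boxtimes\A$ and that $(\FPdim(V)^2,d)=1$ for every $V\in\Q(\C)$, after which Theorem \ref{theo2sec4} yields $\C\cong\C(\Z_d,q)\boxtimes\C(\Z_d,q)_\C'$ directly. Throughout I will use that, since $d$ is square-free and $(n,d)=1$, each prime $p\mid d$ satisfies $p\,\|\,\FPdim(\C)=nd$.

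First I would treat the pointed case separately. If $\C$ is pointed then $\C\cong\C(A,q)$ for a metric group $(A,q)$ with $|A|=nd$; here every simple object is invertible, so $(\FPdim(V)^2,d)=1$ holds trivially. Because $d$ is square-free and coprime to $n$, the $d$-primary part $A_d$ of $A$ is cyclic of order $d$ and is an orthogonal direct summand for the bilinear form attached to $q$ (any cross term is a root of unity whose order divides $(n,d)=1$), so $q$ restricts to a non-degenerate quadratic form on $A_d\cong\Z_d$. Thus $\C\cong\C(\Z_d,q)\boxtimes\C(\Z_d,q)_\C'$ already, and I may henceforth assume $\C$ is not pointed.

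For non-pointed $\C$ the structural input is \cite[Theorem 3.1]{Na}. I would apply it to the integral non-degenerate weakly group-theoretical category $\C$ to produce a Tannakian subcategory $\E=\Rep(G)$ with $(|G|,d)=1$, so that the non-degenerate de-equivariantization $\C_G^0$, of Frobenius-Perron dimension $\FPdim(\C)/|G|^2$, retains the full $d$-part (as $|G|^2$ necessarily divides $n$) and exposes it as a pointed non-degenerate factor; as in the pointed case this factor is $\C(\Z_d,q)$ with $\Z_d$ cyclic because $d$ is square-free, giving $\C_G^0\cong\C(\Z_d,q)\boxtimes\A$. The coprimality condition is extracted from the same structure: for a prime $p\mid d$ the $p$-part of $\C$ is pointed, so $p$ contributes to $G(\C)$ rather than to any higher-dimensional simple object; were some $\FPdim(V)$ divisible by $p$ one would obtain $p^2\mid\FPdim(\C)$, contradicting $p\,\|\,\FPdim(\C)$.

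The main obstacle is producing the Tannakian subcategory $\E=\Rep(G)$ with order prime to $d$ and checking that de-equivariantizing by it isolates the $d$-part as a pointed, non-degenerate, cyclic subcategory of dimension exactly $d$; this is precisely what \cite[Theorem 3.1]{Na} supplies, the essential points being that $d$ is square-free (forcing cyclicity and preventing the dimension from dropping to a proper divisor) and that $(n,d)=1$ (so de-equivariantization by the $n$-supported group $G$ leaves the $d$-part untouched). Once these hypotheses are in place, Theorem \ref{theo2sec4} finishes the argument.
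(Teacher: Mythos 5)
Your overall route --- verifying the two hypotheses of Theorem \ref{theo2sec4} and then quoting it --- is legitimate and non-circular (the paper's proof of Theorem \ref{theo2sec4} nowhere uses Theorem \ref{theo1sec4}), but it is genuinely different from the paper's own argument. The paper proves Theorem \ref{theo1sec4} directly and \emph{before} Theorem \ref{theo2sec4}: it takes a maximal Tannakian subcategory $\E=\Rep(G)$, shows the core $\C_G^0$ is pointed, and then, instead of any appeal to Theorem \ref{theo2sec4}, uses nilpotency of the $G$-crossed category $\C_G$ together with Lemma \ref{lemm1sec4}, \cite[Corollary 5.3]{GN} and \cite[Lemma 10.7]{Kir} to show $G$ acts trivially on $\C(\Z_d,q)$, so that $\C(\Z_d,q)^G\cong\Rep(G)\boxtimes\C(\Z_d,q)$ sits inside $\C$ and \cite[Theorem 3.13]{DrGNO2} concludes. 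Your reduction is instead essentially the argument the paper later uses for the corollary following Theorem \ref{theo2sec4}. As written, however, it has two genuine gaps.

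The first and more serious one: \cite[Theorem 3.1]{Na} does not supply what you attribute to it. It only asserts the existence of \emph{some} non-trivial Tannakian subcategory of a non-pointed integral weakly group-theoretical braided fusion category. For an arbitrary Tannakian $\E=\Rep(G)\subseteq\C$ the de-equivariantization $\C_G^0$ need not be pointed at all, so your key hypothesis $\C_G^0\cong\C(\Z_d,q)\boxtimes\A$ does not follow from that theorem. The missing chain is: take $\E$ \emph{maximal}; then $\C_G^0$ is weakly anisotropic by \cite[Corollary 5.19]{DrGNO2}; then \cite[Theorem 1.1]{Na} says a weakly anisotropic weakly group-theoretical braided category is (Ising or $\text{Vec}$) $\boxtimes$ pointed; finally integrality of $\C_G$ --- which must itself be checked to be inherited from integrality of $\C$ --- rules out the Ising factor. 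Only after this does your metric-group decomposition of the pointed non-degenerate category $\C_G^0$ produce the factor $\C(\Z_d,q)$.

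The second gap is the coprimality condition. The phrase ``the $p$-part of $\C$ is pointed, so $p$ contributes to $G(\C)$'' has no precise meaning, and the implication $p\mid\FPdim(V)\Rightarrow p^2\mid\FPdim(\C)$ is exactly the nontrivial divisibility $\FPdim(V)^2\mid\FPdim(\C)$, which must be justified, not assumed. It does hold here: $\C$ is weakly integral, hence pseudo-unitary, so the non-degenerate category $\C$ is modular and the classical fact that $\FPdim(\C)/\FPdim(V)^2$ is an algebraic integer applies; alternatively one can invoke \cite[Theorem 2.11]{ENO3}, as the paper does in the discussion preceding Corollary \ref{coro2sec4}. (The paper's own proof avoids this issue altogether, extracting the needed coprimality inside $\C_G$ from nilpotency via \cite[Corollary 5.3]{GN}.) With these two repairs your reduction to Theorem \ref{theo2sec4} goes through, at the cost of funneling a statement with an elementary direct proof through the much heavier machinery of that theorem.
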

\begin{proof}
Assume that $\C$ is not pointed, otherwise the result is trivial. Since $\C$ is a weakly group-theoretical integral fusion category, $\C$ contains a non-trivial Tannakian subcategory by \cite[Theorem 3.1]{Na}. Let $\E=\Rep(G)$ be a maximal Tannakian subcategory of $\C$, then  $\C_G$ is a $G$-crossed braided fusion category  whose trivial component is braided equivalent to  $\C_G^0$; moreover, the $G$-grading of $\C_G$ is faithful and $\C_G^0$ is a non-degenerate fusion category by \cite[Proposition 4.56]{DrGNO2}.  Meanwhile, \cite[Corollary 5.19]{DrGNO2} says $\C_G^0$ is  weakly anisotropic. Therefore,    $\C_G^0$ is a pointed fusion category
by \cite[Theorem 1.1]{Na}.  So $\C_G^0$ contains a pointed non-degenerate  fusion category $\C(\Z_d,q)$, since $|G|^2$ divides $\FPdim(\C)$.

Let $\B:=\C_G$ and $\B=\oplus_{g\in G}\B_g$ be the $G$-grading of $\B$. Then $\B_\text{ad}\subseteq\B_e$ by universal property of $\B_\text{ad}$ \cite [Corollary 3.7]{GN}, so $\B$ is nilpotent. Hence, for any $g\in G$ and any object $X\in\Q(\B_g)$, we have that $\FPdim(X)^2$ divides $\FPdim(\B_e)$ \cite[Corollary 5.3]{GN}, so $(\FPdim(X),d)=1$. By Lemma \ref{lemm1sec4} $\text{rank}(\B_g)=\frac{\FPdim(\B_e)}{\FPdim(X)^2}$, in particular $d| \text{rank}(\B_g)$. Since $\B_e=\C_G^0$ is a non-degenerate fusion category, by \cite[Lemma 10.7]{Kir} $\text{rank}(\B_g)$ is equal to the cardinality of $\Q(\C^0_G)^g$, the set of isomorphism classes of simple objects of $\C^0_G$ that are fixed by $g$. Note that $g$ acts as a braided tensor equivalence on $\C_G^0$, thus $\Q(\C_G^0)^g$ is an abelian group and contains $\Z_d$ as a subgroup, which then implies that $G$ acts trivially on $\C(\Z_d,q)$. For $(d,|G|)=1$ and $H^i(\Z_d,G)=0$ for all non-negative integer $i$, fusion category $\E'=(\C_G^0)^G\subseteq\C$ contains a fusion subcategory $\C(\Z_d,q)^G\cong \Rep(G)\boxtimes \C(\Z_d,q)$.

Therefore,  $\C$ contains a non-degenerate fusion subcategory $\C(\Z_d,q)$.
By \cite[Theorem 3.13]{DrGNO2} we have a braided tensor equivalence $\C\cong\C(\Z_d,q)\boxtimes\D$, where $\D$ is the centralizer of $\C(\Z_d,q)$, and $\D$ is also a  non-degenerate fusion category.
\end{proof}

Next we will generalize Theorem \ref{theo1sec4} to strictly weakly integral non-degenerate fusion categories. Assume that a braided weakly group-theoretical fusion category $\C$  is  strictly weakly integral. However,  if $\C$ is weakly anisotropic,  it was shown in  \cite[Theorem 1.1]{Na} that $\C$ needs not be a pointed fusion category. Hence, the proof of Theorem \ref{theo1sec4} fails in general.

Let $\E=\Rep(G)$ be a Tannakian fusion category. Assume that
\begin{align*}
B:=\I(I)=\oplus_{X\in\Q(\E)}X\boxtimes X^*
 \end{align*}
 is the connected \'{e}tale algebra such that $(\E\boxtimes\E)_B\cong\E$ as braided fusion categories \cite[Remark 2.11]{DNO}.
Let $\B$ and $\C$ be braided fusion  categories, which contain $\E$ as a fusion subcategory. Since $\E\boxtimes\E\subseteq \C \boxtimes\B$, we have a braided  fusion category $(\C\boxtimes\B)_B^0$ by \cite[Proposition 4.30]{DrGNO2}, we denote   it by $\C\widehat{\boxtimes}_\E\B$, for simplification  of notation.

Given two $G$-crossed braided fusion categories $\C_G$ and $\D_G$, notice that the fusion category $\C_G\boxtimes\B_G$ is graded by group $G\times G$ with trivial component $\C_G^0\boxtimes\B_G^0$. Let  $\C_G\boxtimes_G\B_G$ denote the fusion subcategory of $\C_G\boxtimes\B_G$ graded by group $H$ of diagonal elements of $G\times G$, so $H\cong G$. Then there  exists a well-defined action of $G$ on the braided fusion subcategory $\C_G^0\boxtimes\B_G^0$, and naturally $\C\widehat{\boxtimes}_\E\B\cong (\C_G\boxtimes_G\B_G)^G$ as braided fusion categories by \cite[Theorem 7.12]{ENO2}.

In addition, $\C\boxtimes_\E\B$ contains a Tannakian subcategory $(\E\boxtimes\E)_B\cong \Rep(G)$, then it follows from \cite[Lemma 4.32]{DrGNO2} that there is a  braided fusion category equivalence \begin{align*}
(\C\boxtimes_\E\B)_G^0\cong(\C\boxtimes\B)_{G\times G}^0,
\end{align*} which then is equivalent to $\C^0_G\boxtimes\B^0_G$.

\begin{prop}\label{prop1sec4}Let $A,B$ be two connected \'{e}tale algebras  in a non-degenerate fusion category $\C$. If $A\cap B=I$, moreover $A,B$ are centralizing each other, then we have the following braided fusion categories equivalences
\begin{align*}
(\C_B^0)_{A\otimes B}^0\cong \C_{A\otimes B}^0\cong(\C_A^0)_{A\otimes B}^0.
\end{align*}
\end{prop}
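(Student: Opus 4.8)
The plan is to reduce the asserted three-fold equivalence to a single ``transitivity'' statement for local modules together with the symmetry $A\leftrightarrow B$. First I would check that $A\otimes B$ is itself a connected \'{e}tale algebra in $\C$: commutativity of its multiplication is forced by the hypothesis that $A$ and $B$ centralize each other, separability (and hence semisimplicity of the module category) follows from that of $A$ and $B$, and the connectedness $\text{Hom}_\C(I,A\otimes B)\cong\K$ is exactly what $A\cap B=I$ provides for centralizing connected \'{e}tale algebras (a statement of the type available in \cite{DMNO}). Thus $\C_{A\otimes B}^0$ is a braided fusion category, and, using the Frobenius-Perron dimension formula for local modules in a non-degenerate category, I record $\FPdim(\C_{A\otimes B}^0)=\FPdim(\C)/(\FPdim(A)^2\FPdim(B)^2)$.

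Next I would identify the algebra that appears in the inner construction. Applying the free-module functor $F_A\colon\C\to\C_A$, $X\mapsto X\otimes A$, the algebra $B$ is sent to $\bar B:=A\otimes B$; because $A$ and $B$ centralize each other this free module is a \emph{local} $A$-module, so $\bar B$ is a connected \'{e}tale algebra inside the braided fusion category $\C_A^0$, with $\text{Hom}_{\C_A^0}(A,\bar B)\cong\text{Hom}_\C(I,A\otimes B)\cong\K$ and $\FPdim_{\C_A^0}(\bar B)=\FPdim(B)$. A direct count then gives $\FPdim((\C_A^0)_{\bar B}^0)=\FPdim(\C_A^0)/\FPdim(B)^2=\FPdim(\C)/(\FPdim(A)^2\FPdim(B)^2)$, matching the dimension computed above. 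This numerical coincidence is what will let me promote a fully faithful functor to an equivalence at the end.

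The core step is to construct a braided tensor functor $\C_{A\otimes B}^0\to(\C_A^0)_{\bar B}^0$ and to produce an inverse on objects. The functor is the natural one: a local $A\otimes B$-module restricts to an object carrying commuting $A$- and $B$-actions, which one reorganizes as a $\bar B$-module in $\C_A^0$. The heart of the argument, and the step I expect to be the main obstacle, is the \emph{locality bookkeeping}: I must show that the monodromy of $M$ with $A\otimes B$ factors, via the hexagon axioms and the centralizing identity $c_{B,A}c_{A,B}=\text{id}_{A\otimes B}$, through the separate monodromies with $A$ and with $B$, so that $M$ is local over $A\otimes B$ if and only if it is simultaneously local over $A$ and over $B$. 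Granting this, a local $A\otimes B$-module restricts to a local $A$-module (an object of $\C_A^0$) on which the residual $B$-action is precisely a local $\bar B$-module structure, and conversely; this yields the desired functor together with its object-level inverse.

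Finally, since the constructed map is a braided tensor functor that is injective on objects (for which the criterion of \cite[Proposition 3.4]{LKW} can also be invoked) between categories of equal Frobenius-Perron dimension, it is an equivalence by \cite[Proposition 6.3.3]{EGNO}, giving $\C_{A\otimes B}^0\cong(\C_A^0)_{\bar B}^0$. Interchanging the roles of $A$ and $B$ is legitimate because the hypotheses $A\cap B=I$ and mutual centralization are symmetric, and the identical argument then yields $\C_{A\otimes B}^0\cong(\C_B^0)_{\bar A}^0$; chaining the two isomorphisms produces the asserted triple equivalence.
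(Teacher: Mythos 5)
Your proposal is correct and takes essentially the same route as the paper: both arguments first show that $A\otimes B$ is a connected \'{e}tale algebra (commutativity from the centralizing hypothesis, connectedness from $A\cap B=I$), then identify $\C_{A\otimes B}^0$ with the iterated local-module category along the inclusion $A\subseteq A\otimes B$, and finish by Frobenius--Perron dimension counting with \cite[Proposition 6.3.3]{EGNO} and the symmetry of the hypotheses in $A$ and $B$. The only difference is that your central step --- the ``locality bookkeeping'' showing that $M$ is local over $A\otimes B$ if and only if it is local over $A$ and over $B$ --- is a hands-on re-derivation of the transitivity statement that the paper simply cites as \cite[Corollary 3.26]{DMNO}, together with \cite[Corollary 3.30, Corollary 3.32]{DMNO} for non-degeneracy and the dimension count.
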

\begin{proof}As \'{e}tale algebras $A,B$ are centralizing each other, so $A\otimes B$ is also a commutative algebra; since \'{e}tale algebras are self-dual \cite[Remark 3.4]{DMNO}, $\text{Hom}_\C(I,A\otimes B)\cong\text{Hom}_\C(A,B)$ is one-dimensional, hence $A\otimes B$ is connected. It is easy to see that $A\otimes B$ is a commutative algebra over $A$, hence we have an abelian category equivalence  $(\C_A)_{A\otimes B}\cong\C_{A\otimes B}$, so $A\otimes B$ is a connected \'{e}tale algebra by \cite[Proposition 2.7, Proposition 3.16]{DMNO}.
By \cite[Corollary 3.30, Corollary 3.32]{DMNO} $(\C_A^0)_{A\otimes B}^0$ and $(\C_B^0)_{A\otimes B}^0$ are non-degenerate fusion categories with same Frobenius-Perron dimension.  It follows from \cite[Proposition 6.3.3]{EGNO} and \cite[Corollary 3.26]{DMNO} that   $\C_{A\otimes B}^0\cong(\C_A^0)_{A\otimes B}^0$ as braided fusion categories, then $(\C_B^0)_{A\otimes B}^0\cong \C_{A\otimes B}^0\cong (\C_A^0)_{A\otimes B}^0$.
\end{proof}
\begin{lemm}[\cite{LKW}]\label{lemm2sec4}Let $\C,\D$ be non-degenerate braided fusion categories. Assume that there exists  a Tannakian category $\E=\Rep(G)$ which embeds in both $\C$ and $\D$, and such that there is a $G-$equivariant braided equivalence $\C_G^0\cong\D_G^0$.
Then there is an element $\omega\in H^3(G,\K^*)$ and a braided equivalence $\C\cong\D\widehat{\boxtimes}_\E\Y(\text{Vec}_G^\omega)$ (or, equivalently, an equivalence of $G-$crossed categories
$\C_G\cong \D_G\boxtimes_G\text{Vec}_G^{\omega}$).
\end{lemm}
\begin{proof} Since the equivalence $\C_G^0\cong\D_G^0$ is $G-$equivariant, we have a braided equivalence
$(\C_G^0)^G\cong (\D_G^0)^G$. Thus the categories $\C$ and $\D$ are minimal
extensions of the same category $(\C_G^0)^G$ with M\"uger center $\E$. According to \cite[Theorem 5.4]{LKW}
any two such extensions should differ by a twist coming from the group $\M_\text{ext}(\E)$ of minimal extensions
of $\E$. The result follows from explicit description of this group in \cite[4.3]{LKW}.

Alternatively, the de-equivariantizations $\C_G$ and $\D_G$ are $G-$crossed categories which share
the same zero component $\C_G^0\cong\D_G^0$ together with $G-$action. Now the result
follows from the description of all $G-$crossed extensions in \cite[Theorem 7.12]{ENO2}.
\end{proof}
For any braided fusion category $\C$, the equivalence classes of invertible $\C$-module categories form a group, which is called the  Picard group $\text{Pic}(\C)$ of  $\C$ \cite[section 4]{ENO2}. In addition, if  $\C$ is  a non-degenerate fusion category, then \cite[Theorem 5.2]{ENO2} says that the  group $\text{Aut}^\text{br}_\otimes(\C)$ of braided tensor auto-equivalences of $\C$  is isomorphic to $\text{Pic}(\C)$.
\begin{theo}\label{theo2sec4}
Let $\C$ be a  non-degenerate fusion category with $\FPdim(\C)=nd$, where $n$ is a positive integer, and $d$ is a square-free integer such that $(n,d)=1$. Assume that $\C$ contains a Tannakian subcategory $\E=\Rep(G)$ satisfying  $\C_G^0\cong\C(\Z_d,q)\boxtimes\A$, moreover for any $V\in\Q(\C)$, $(\FPdim(V)^2,d)=1$. Then $\C\cong\C(\Z_d,q)\boxtimes\C(\Z_d,q)_\C'$ as braided fusion category.
\end{theo}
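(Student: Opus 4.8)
The plan is to reduce the statement to producing a single non-degenerate subcategory $\C(\Z_d,q)\subseteq\C$: once this is in hand, $\C$ is modular and $\C(\Z_d,q)$ is a non-degenerate fusion subcategory, so \cite[Theorem 3.13]{DrGNO2} yields $\C\cong\C(\Z_d,q)\boxtimes\C(\Z_d,q)_\C'$, as desired. Thus the whole problem is to locate a copy of $\C(\Z_d,q)$ in $\C$. Following the strategy of Theorem \ref{theo1sec4}, I would look for it inside $\E'=(\C_G^0)^G\subseteq\C$; that is, I would show that the $G$-action on $\C_G^0$ arising from the $G$-crossed structure of $\B:=\C_G$ fixes the subcategory $\C(\Z_d,q)$. (If $G$ is trivial then $\C=\C_G^0=\C(\Z_d,q)\boxtimes\A$ and there is nothing to do, so assume $G\neq 1$.)

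First I would record the arithmetic that makes everything coprime. Since $\E=\Rep(G)$ is symmetric we have $\E\subseteq\E'$, and $\FPdim(\E)\FPdim(\E')=\FPdim(\C)=nd$ gives $\FPdim(\E')=nd/|G|$; hence $|G|\mid nd/|G|$, i.e. $|G|^2\mid nd$. As $d$ is square-free and $(n,d)=1$ this forces $(|G|,d)=1$ and $|G|^2\mid n$, so $\FPdim(\A)=\FPdim(\C_G^0)/d=n/|G|^2$ is coprime to $d$. Because $\C=(\C_G)^G$, the forgetful functor $\C\to\C_G$ is a tensor functor and therefore preserves Frobenius--Perron dimensions; combined with Clifford theory (every simple of $\C_G$ occurs in the image of some simple $V\in\Q(\C)$, and $\FPdim(V)=[G:H]\dim(\pi)\FPdim(X)$ for a subgroup $H\le G$ and a projective irreducible representation $\pi$ of $H$, so the two dimensions differ by a positive integer dividing $|G|$) this gives $(\FPdim(X)^2,d)=1$ for every simple $X$ of $\C_G$. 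In particular $G(\C_G^0)=\Z_d\times G(\A)$ with $|G(\A)|$ coprime to $d$, so $\Z_d$ is the characteristic $d$-part of $G(\C_G^0)$ and $\C(\Z_d,q)$, together with its centralizer $\A$, is stable under every braided auto-equivalence of $\C_G^0$. This produces a homomorphism $\rho\colon G\to O(\Z_d,q)$, and since $d$ is square-free $O(\Z_d,q)=\prod_{p\mid d}O(\Z_p,q_p)$ with $O(\Z_2,q_2)$ trivial and $O(\Z_p,q_p)=\{\pm1\}$ for odd $p$.

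The heart of the argument, and the step I expect to be the main obstacle, is to prove that $\rho$ is trivial; note that here, unlike in Theorem \ref{theo1sec4}, $\C_G^0$ is not pointed, so $\B$ need not be nilpotent and the counting of Lemma \ref{lemm1sec4} is unavailable. Instead I would exploit the non-trivial graded components $\B_g$. Fix $g\in G$ and an odd prime $p\mid d$ with $\rho_p(g)=-1$. Using $\text{Pic}(\C_G^0)\cong\text{Aut}^{\text{br}}_{\otimes}(\C_G^0)\cong\text{Aut}^{\text{br}}_{\otimes}(\C(\Z_d,q))\times\text{Aut}^{\text{br}}_{\otimes}(\A)$ (the splitting being valid because the two factors have coprime dimensions), the invertible module $\B_g$ factors as a Deligne product, and its $\C(\Z_p,q_p)$-part is the invertible $\C(\Z_p,q_p)$-module realizing $-1$. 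By \cite[Lemma 10.7]{Kir} this module has rank equal to the number of $(-1)$-fixed simple objects of $\C(\Z_p,q_p)$, namely $1$, while its Frobenius--Perron dimension is $\FPdim(\C(\Z_p,q_p))=p$; hence its unique simple object has dimension $\sqrt p$. Consequently $p\mid\FPdim(X)^2$ for every simple object $X$ of $\B_g$. Since the forgetful functor $\C\to\C_G$ is surjective, $X$ occurs in the image of some simple $V\in\Q(\C)$ with $\FPdim(V)=[G:H]\dim(\pi)\FPdim(X)$, whence $p\mid\FPdim(V)^2$, contradicting $(\FPdim(V)^2,d)=1$. Therefore $\rho_p(g)=1$ for all odd $p$, and $\rho$ is trivial.

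Finally, with $G$ acting trivially on $\C(\Z_d,q)$ and $(|G|,d)=1$ (so that $H^{\geq 1}(G,\Z_d)$ vanishes), the equivariantization of $\C(\Z_d,q)\subseteq\C_G^0$ produces $\Rep(G)\boxtimes\C(\Z_d,q)\subseteq(\C_G^0)^G=\E'\subseteq\C$, exactly as in the last paragraph of the proof of Theorem \ref{theo1sec4}. Thus $\C$ contains $\C(\Z_d,q)$ as a non-degenerate fusion subcategory, and \cite[Theorem 3.13]{DrGNO2} finishes the proof. The points I would check most carefully are the factorization of $\text{Aut}^{\text{br}}_{\otimes}(\C_G^0)$ (equivalently, that $\C(\Z_d,q)$ and $\A$ are $G$-stable and that the invertible $\C_G^0$-module $\B_g$ splits as the corresponding Deligne product), and the exact form of Clifford theory relating $\FPdim(V)$ and $\FPdim(X)$ across the forgetful functor $\C\to\C_G$.
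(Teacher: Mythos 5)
Your proposal is correct in outline and meets the paper at both ends (everything is reduced to producing $\C(\Z_d,q)\subseteq\C$, and \cite[Theorem 3.13]{DrGNO2} finishes), but the middle of your argument is genuinely different from the paper's. The paper never analyzes the components $\B_g$ of $\C_G$ directly: it uses obstruction theory (\cite[Corollary 7.9, Theorems 7.12 and 8.16]{ENO2}, with $H^3(G,\Z_d)=0$ by coprimality) to build an \emph{abstract} $G$-crossed extension $\widetilde{\C(\Z_d,q)}$, then manufactures auxiliary non-degenerate categories $\B=\widetilde{\C(\Z_d,q^{-1})}^G$, $\widetilde{\A}=(\C\widehat{\boxtimes}_\E\B)_A^0$ (via a $G$-equivariant \'{e}tale algebra $A$) and $\D=\widetilde{\A}\widehat{\boxtimes}_\E\B^{\text{rev}}$ with $\D_G^0\cong\C_G^0$, and invokes Lemma \ref{lemm2sec4} to get $\C\cong\D\widehat{\boxtimes}_\E\Y(\text{Vec}_G^\omega)$; the contradiction to a non-trivial $\rho$ is then obtained by pushing the $\sqrt{t}$-dimensional simples of $\widetilde{\C(\Z_d,q)}$ (which exist by \cite[Lemma 10.7]{Kir}) through this chain of equivalences into $\C$. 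You instead stay inside $\C_G$ itself: since $\Z_d$ is the full $d$-torsion of $G(\C_G^0)$, both $\C(\Z_d,q)$ and $\A$ are preserved by every braided auto-equivalence, $\text{Aut}^{\text{br}}_\otimes(\C_G^0)$ and hence $\text{Pic}(\C_G^0)$ split as products, each component $\B_g$ factors as a Deligne product of invertible modules, the $\C(\Z_p,q_p)$-factor for $\rho_p(g)=-1$ is forced (Pic has order two, and the only non-regular indecomposable module over $\C(\Z_p,q_p)$ has rank one) to contribute a $\sqrt{p}$ to every simple of $\B_g$, and Clifford theory for $\C=(\C_G)^G$ transports the contradiction to $\Q(\C)$. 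Your route is shorter and avoids Lemma \ref{lemm2sec4} and the entire $\B,\widetilde{\A},\D$ construction; what the paper's detour buys is that it never needs the Picard factorization of the actual components $\B_g$, only the abstract existence of \emph{some} crossed extension of $\C(\Z_d,q)$, which is quoted verbatim from \cite{ENO2}.

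One step you did not flag needs care, though the paper is equally cavalier about it (its unexplained ``$s$ an algebraic integer with $s^2\in\Z$''): the divisibility $p\mid\FPdim(X)^2$. From $\FPdim(X)=\sqrt{p}\,\FPdim(n)$, where $n$ is a simple of the complementary module factor, one only contradicts $(\FPdim(V)^2,d)=1$ if $\FPdim(n)^2$ is an algebraic integer, which is not automatic for module-category dimensions. This is patchable inside your framework: because the $\C(\Z_p,q_p)$-factor of $\B_g$ has a unique simple object, every $a\in\Z_p$ satisfies $a\otimes X\cong X$ for all simple $X\in\B_g$, whereas $\Z_p$ acts freely on $\Q(\C_G^0)$; hence $[X\otimes X^*]$ is a $\Z_p$-invariant element of the Grothendieck ring, its support decomposes into free $\Z_p$-orbits of simples of equal dimension, and $\FPdim(X)^2=\FPdim(X\otimes X^*)\in p\,\overline{\Z}$ follows, so $p$ divides $\FPdim(V)^2$ as claimed. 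With that remark added, your proof is complete.
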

\begin{proof} The category $\C_G^0\cong\C(\Z_d,q)\boxtimes\A$ is non-degenerate by \cite[Proposition 4.56 (ii)]{DrGNO2}. It follows that both $\C(\Z_d,q)$ and $\A$ are non-degenerate. Also we have $(|G|,d)=1$
since $\FPdim(\C)=|G|^2\FPdim(\C_G^0)$.

Consider the natural action of $G$ on $\C_G^0\cong\C(\Z_d,q)\boxtimes\A$. The order of the group
of invertible objects of $\C_G^0$ divides $\FPdim(\C_G^0)$, hence it divides $\FPdim(\C)=nd$, hence
it is of the form $n_1d$ where $(n_1,d)=1$. It follows that all invertible
objects of $\C_G^0$ of order dividing $d$ are contained in $\C(\Z_d,q)$. Thus this subcategory
is preserved by the action of $G$; hence $G$ also preserves $\A=\C(\Z_d,q)'_{\C_G^0}$. Thus
the action of $G$ on $\C_G^0\cong\C(\Z_d,q)\boxtimes\A$ is isomorphic to the product of actions
on $\C(\Z_d,q)$ and on $\A$.

We are going to show the following:

(1) The actions of $G$ on $\C(\Z_d,q)$ and $\A$ give rise to $G-$crossed extensions $\widetilde{\C(\Z_d,q)}$
and $\widetilde{\A}$ of these categories,
i.e. the corresponding obstructions (see \cite[Theorem 7.12]{ENO2}) ] in $H^4(G,\K^*)$ vanish.

(2) The action of $G$ on $\C(\Z_d,q)$ is trivial, so
the action of $G$ on $\C_G^0\cong\C(\Z_d,q)\boxtimes\A$ is isomorphic to the product of trivial action
on $\C(\Z_d,q)$ and the action above on $\A$.

Assume that (1) and (2) are proved. Let $\widetilde{\A}^G$ be the equivariantization of $\widetilde{\A}$; this
is a non-degenerate braided fusion category, see \cite[Proposition 4.56]{DrGNO2}. The category
$\D=\C(\Z_d,q)\boxtimes \widetilde{\A}^G$ is also a non-degenerate braided fusion category containing
Tannakian subcategory $\E=\Rep(G)$ in ${\bf 1}\boxtimes \widetilde{\A}^G\simeq \widetilde{\A}^G$;
by (2) above we have a $G-$equivariant braided equivalence $\D_G^0\cong \C_G^0$. Thus by Lemma
\ref{lemm2sec4} we will have
$$\C\cong\D\widehat{\boxtimes}_\E\Y(\text{Vec}_G^\omega)\cong (\C(\Z_d,q)\boxtimes \widetilde{\A}^G)\widehat{\boxtimes}_\E\Y(\text{Vec}_G^\omega)\cong \C(\Z_d,q)\boxtimes (\widetilde{\A}^G\widehat{\boxtimes}_\E\Y(\text{Vec}_G^\omega)),$$
which immediately implies Theorem \ref{theo2sec4}. Thus it remains to prove (1) and (2).

Let us prove (1). For the category $\C(\Z_d,q)$ the obstruction is annihilated by multiplication $d^4$ by \cite[Theorem 8.16]{ENO2}. On the other hand, any element of $H^4(G,\K^*)$ is annihilated by
multiplication by $|G|$. Since $(|G|,d)=1$, we get that the obstruction is zero.

Let us construct $\widetilde{\A}$. Pick a $G-$crossed extension $\widetilde{\C(\Z_d,q)}$ and
let $\B$ be the category $\widetilde{\C(\Z_d,q)}^G$ with reversed braiding. Then $\B$ is non-degenerate,
it contains $\E$, and $\B_G^0=\C(\Z_d,q)^\text{rev}\cong \C(\Z_d,q^{-1})$ where $q^{-1}$ is the inverse quadratic form of $q$ on $\Z_d$, that is, $q^{-1}(v):=q(v)^{-1}$, for all $v\in\Z_d$. Now consider the product
of $G-$crossed categories $\C_G\boxtimes_G\B_G$ (the category $\C_G\boxtimes \B_G$ is naturally
graded by $G\times G$ and $\C_G\boxtimes_G\B_G$ is the part supported on the diagonal subgroup
$\Delta(G)\subset G\times G$). Clearly
$$(\C_G\boxtimes_G\B_G)^0=\C_G^0\boxtimes\B_G^0=\C(\Z_d,q)\boxtimes\A \boxtimes\C(\Z_d,q^{-1})=
\C(\Z_d\oplus \Z_d,q\oplus q^{-1})\boxtimes\A.$$
Note that the simple objects of $\C(\Z_d\oplus \Z_d,q\oplus q^{-1})$ labelled by the diagonal subgroup
$\Delta (\Z_d)\subset \Z_d\oplus \Z_d$ generate a Tannakian subcategory ${\mathcal H} \subset \C(\Z_d\oplus \Z_d,q\oplus q^{-1})$, which is invariant under $G-$action. Then $G-$equivariantization ${\mathcal H}^G$ of ${\mathcal H}$ is a Tannakian category
equivalent to $\Rep(\widetilde{G})$ for some group $\widetilde{G}$ which fits into exact sequence
$1\to\Z_d\to \widetilde{G}\to G\to 1$. Since $(|G|,d)=1$ this sequence splits, and $\widetilde{G}\cong\Z_d\rtimes G$. Thus ${\mathcal H}^G=\Rep(\widetilde{G})$ contains a connected \'{e}tale algebra $A=\text{Fun}(\widetilde{G}/G)$ ($\K-$valued functions on finite set $\widetilde{G}/G$). In other words, the direct sum of all simple
objects of $\C(\Z_d\oplus \Z_d,q\oplus q^{-1})$ labelled by the diagonal subgroup
$\Delta (\Z_d)$ admits a structure of $G-$equivariant connected \'{e}tale algebra.

Now we would like to define $\widetilde{\A}=(\C_G\boxtimes_G\B_G)_A^0$. However, this requires a generalization
of the construction of dyslectic modules to the setting of $G-$crossed categories which we could not find in existing
literature. Thus we consider the algebra $A$ as an object of non-degenerate braided fusion category $(\C_G\boxtimes_G\B_G)^G\supset ((\C_G\boxtimes_G\B_G)^0)^G$. Finally we define
$$\widetilde{\A}:=(((\C_G\boxtimes_G\B_G)^G)_A^0)_G$$
(note that $((\C_G\boxtimes_G\B_G)^G)_A^0$ contains $({\mathcal H}^G)_A^0=({\mathcal H}^G)_A=\E =\Rep(G)$, so de-equivariantization by $G$ makes sense). Let $B=\text{Fun}(G)\in \E \subset {\mathcal H}^G$ be the regular
algebra of $G$, see \cite[Example 2.8]{DMNO}. Then the algebras $A$ and $B$ satisfy the assumptions
of Proposition \ref{prop1sec4} and we get a braided equivalence
$$\widetilde{\A}^0=(((\C_G\boxtimes_G\B_G)^G)_A^0)_G^0=(((\C_G\boxtimes_G\B_G)^G)_G^0)_A^0=((\C_G\boxtimes_G\B_G)^0)_A^0=\A$$
(we use that operations $(?)_G^0$ and $(?)_B^0$ are identified, see \cite[Example 3.14]{DMNO}).
Moreover it is clear that the canonical $G-$action on $\widetilde{\A}^0=\A$ coincides with the action defined
in the beginning of the proof (since we can describe the identification $\A\cong \widetilde{\A}^0$ as follows:
start with an obvious embedding $\A \subset (\C_G\boxtimes_G\B_G)^0\subset \C_G\boxtimes_G\B_G$ and
promote it to $\A^G \subset (\C_G\boxtimes_G\B_G)^G\to ((\C_G\boxtimes_G\B_G)^0)_A$; since $\A$
and $A$ centralize each other, the image is contained in $((\C_G\boxtimes_G\B_G)^0)_A^0$, so we get
a $G-$equivariant braided tensor functor $\A=(\A^G)_G\to (((\C_G\boxtimes_G\B_G)^0)_A^0)_G$).
Thus the action of $G$ on $\A$ can be upgraded to $G-$crossed extension $\widetilde{\A}$ and (1) is proved.

Let us prove (2), i.e. that $G$ acts on $\C(\Z_d,q)$ trivially. If not, then by  \cite[Lemma 10.7]{Kir}, there exists a  non-trivial component of the $G$-crossed grading of $\widetilde{\C(\Z_d,q)}$ which has  rank less than $d$. Since $\C(\Z_d,q)$ is the trivial component of $\widetilde{\C(\Z_d,q)}$, and the $G$-grading of $\widetilde{\C(\Z_d,q)}$  is faithful, there exists non-invertible simple objects of $\widetilde{\C(\Z_d,q)}$ with Frobenius-Perron dimension  $\sqrt{t}$ by \cite[Corollary 5.3]{GN}, where $t>1$ is an integer and $t|d$. Then the $G-$crossed category
$\widetilde{\D}:=\widetilde{\C(\Z_d,q)}\boxtimes_G\widetilde{\A}$ contains a simple object $W$ with
$\FPdim(W)^2=st$ for some integer $s$. By Lemma \ref{lemm2sec4} the same is true for the category $\C_G$;
hence the category $\C=(\C_G)^G$ contains a simple object $V$ such that $\FPdim(V)^2$ is divisible by $st$.
This contradicts the assumption $(\FPdim(V)^2,d)=1$ for all simple objects in $\C$ and (2) is proved.
\end{proof}

\begin{remk} In Theorem \ref{theo2sec4}, the condition that  $(\FPdim(V)^2,d)=1$ for all simple objects $V\in \C$ can not be dropped. For example, let  $d>1$ be a square-free odd integer, there exists a non-degenerate fusion category $\C$ of Frobenius-Perron dimension $4d$, which is braided  equivalent to a $\Z_2$-equivariantization of a Tambara-Yamagami fusion category $\mathcal{TY}(\Z_d,\tau,\mu)$. It was proved that $\C$ contain a simple object  of Frobenius-Perron dimension $\sqrt{d}$ and $\C_\text{pt}=\Rep(\Z_2)$, see \cite[Theorem 3.1]{BGNPRW} for details. Obviously, $\C\ncong\C(\Z_d,q)\boxtimes\C(\Z_d,q)_\C'$ as braided  fusion category.
\end{remk}

\begin{coro}Let $\C$ be a weakly group-theoretical non-degenerate fusion category, and $d$ be a  square-free integer. Assume that  $\FPdim(\C)=nd$ and  $(n,d)=1$. If $(\FPdim(X)^2,d)=1$ for all $X\in\Q(\C)$, then $\C\cong\C(\Z_d,q)\boxtimes\C(\Z_d,q)_\C'$ as braided fusion category.
\end{coro}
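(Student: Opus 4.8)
The plan is to deduce the statement from Theorem \ref{theo2sec4} by manufacturing the only hypothesis of that theorem which is not already assumed, namely a Tannakian subcategory $\E=\Rep(G)\subseteq\C$ together with a splitting $\C_G^0\cong\C(\Z_d,q)\boxtimes\A$ with $\A$ non-degenerate; the remaining conditions $\FPdim(\C)=nd$, $(n,d)=1$ and $(\FPdim(X)^2,d)=1$ for all simple $X$ are part of the hypotheses. If $\C$ is pointed the assertion follows at once from the primary decomposition of its metric group, so I would assume $\C$ is not pointed. First I would pick a maximal Tannakian subcategory $\E=\Rep(G)\subseteq\C$ (possibly trivial). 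By \cite[Corollary 5.19]{DrGNO2} the de-equivariantization $\C_G^0$ is weakly anisotropic, by \cite[Proposition 4.56]{DrGNO2} it is non-degenerate, and by \cite[Proposition 4.1]{ENO3} it is again weakly group-theoretical. Hence \cite[Theorem 1.1]{Na} applies and yields a braided equivalence $\C_G^0\cong\B\boxtimes\D$, where $\B\cong\text{Vec}$ or $\B$ is an Ising category, and $\D\cong\C(A,q)$ is pointed non-degenerate for some metric group $(A,q)$.

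The heart of the matter is an arithmetic analysis of $\FPdim(\C_G^0)=nd/|G|^2$. Since $|G|^2\mid nd$ with $d$ square-free and $(n,d)=1$, no prime $p\mid d$ can divide $|G|$ (otherwise $p^2\mid nd$, contradicting that the exponent of $p$ in $nd$ equals $1$); thus $(|G|,d)=1$ and the exponent of each $p\mid d$ in $\FPdim(\C_G^0)$ is exactly $1$. Now $\FPdim(\C_G^0)=\FPdim(\B)\,|A|$ with $\FPdim(\B)\in\{1,4\}$. For an odd prime $p\mid d$ the factor $\FPdim(\B)$ contributes nothing, so the exponent of $p$ in $|A|$ is $1$; and if $2\mid d$, then $\FPdim(\B)=4$ is impossible (it would force the exponent of $2$ in $|A|$ to be $-1$), so $\B\cong\text{Vec}$ and again the exponent of $2$ in $|A|$ is $1$. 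In every case each $p$-primary part $A_p$ with $p\mid d$ has order exactly $p$. Because the primary decomposition of a metric group is orthogonal with non-degenerate summands, $\prod_{p\mid d}A_p\cong\Z_d$ carries a non-degenerate quadratic form, giving a non-degenerate pointed subcategory $\C(\Z_d,q)\subseteq\D\subseteq\C_G^0$. Splitting it off through \cite[Theorem 3.13]{DrGNO2} produces $\C_G^0\cong\C(\Z_d,q)\boxtimes\A$ with $\A:=\C(\Z_d,q)_{\C_G^0}'$ non-degenerate, which is precisely the input demanded by Theorem \ref{theo2sec4}.

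With this verified I would simply invoke Theorem \ref{theo2sec4}, concluding $\C\cong\C(\Z_d,q)\boxtimes\C(\Z_d,q)_\C'$. The one delicate point, and the main obstacle relative to the integral case treated in Theorem \ref{theo1sec4}, is the possible Ising factor $\B$, which keeps $\C_G^0$ from being pointed; the resolution is exactly the valuation bookkeeping above, showing that Ising can occur only when $d$ is odd and then touches solely the prime $2$, so that the entire $d$-torsion, with its non-degenerate form, still sits inside the pointed factor $\D$ and splits off. I would also record the edge case in which $G$ is trivial (so that $\C$ is itself weakly anisotropic): then $\C_G^0=\C$, the same analysis gives $\C\cong\C(\Z_d,q)\boxtimes\A$ directly, and the desired conclusion is immediate with $\C(\Z_d,q)_\C'=\A$.
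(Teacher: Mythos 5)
Your proposal is correct and takes essentially the same route as the paper: assume $\C$ is not pointed, pass to a maximal Tannakian subcategory $\Rep(G)$, use \cite[Corollary 5.19]{DrGNO2} together with \cite[Theorem 1.1]{Na} to identify the weakly anisotropic core $\C_G^0$ as an Ising-or-trivial factor times a pointed non-degenerate category, and then feed this into Theorem \ref{theo2sec4}. The only difference is one of detail: your valuation bookkeeping (showing $(|G|,d)=1$ and ruling out interference from the Ising factor at primes dividing $d$) and the orthogonal primary decomposition of the metric group make explicit what the paper compresses into the single assertion that $\C_G^0$ ``satisfies the conditions of Theorem \ref{theo2sec4}''.
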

\begin{proof}
 Assume $\C$ is not pointed, otherwise the result is trivial.  If $\C$ is a weakly anisotropic fusion category, it follows from \cite[Theorem 1.1]{Na} that  $\C$ is braided equivalent to a Deligne tensor product of an Ising category and a pointed fusion category. If not, let $\E=\Rep(G)$ be a maximal Tannakian subcategory of $\C$. By \cite[Corollary 5.19]{DrGNO2}  $\C_G^0$ is a weakly anisotropic fusion category, as $|G|^2$ divides $\FPdim(\C)$, thus \cite[Theorem 1.1]{Na} says that $\C_G^0$ satisfies the conditions of    Theorem \ref{theo2sec4}, hence $\C$ contains a non-degenerate fusion subcategory $\C(\Z_d,q)$.
\end{proof}
Moreover, by using  Theorem \ref{theo2sec3} and Theorem \ref{theo2sec4},  we  have the following direct result.
\begin{coro}\label{coro1sec4}Let $n$ be an integer, $d$  an odd square-free integer such that $(n,d)=1$.
Let $\C$ be a slightly degenerate weakly group-theoretical fusion category with $\FPdim(\C)=nd$. If  there exists a minimal extension   $\A$ of   $\C$ such that $(\FPdim(V)^2,d)=1$ for all $V\in\Q(\A)$, then  $\C\cong \C(\Z_d,q)\boxtimes\C(\Z_d,q)_\C'$ as   braided fusion category.
\end{coro}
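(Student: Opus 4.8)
The plan is to apply the preceding Corollary to the given minimal extension $\A$ and then to pull the resulting pointed non-degenerate subcategory back into $\C$. Since $\C$ is slightly degenerate, $\C'=\text{sVec}$, and by definition of a minimal extension $\A$ is non-degenerate with $\C_\A'=\text{sVec}$. Non-degeneracy of $\A$ together with the double-centralizer theorem \cite[Theorem 3.10, Corollary 3.11]{DrGNO2} gives $\FPdim(\A)=\FPdim(\C)\,\FPdim(\C_\A')=2nd$ and $(\text{sVec})_\A'=(\C_\A')_\A'=\C$. As $d$ is odd and $(n,d)=1$, we have $(2n,d)=1$; hence $\A$ is a non-degenerate fusion category with $\FPdim(\A)=(2n)d$, $d$ square-free, $(2n,d)=1$, and, by hypothesis, $(\FPdim(V)^2,d)=1$ for every $V\in\Q(\A)$. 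Thus the preceding Corollary will apply to $\A$ as soon as we know that $\A$ is weakly group-theoretical.

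The key step is to establish that $\A$ is weakly group-theoretical. Let $f$ be the fermion, that is, the nontrivial invertible simple object of $\text{sVec}=\C_\A'$, so that $f\otimes f\cong I$. For each simple $X$ the double braiding $c_{f,X}c_{X,f}$ is a scalar $\beta(X)\in\K^*$, the assignment $\beta$ is multiplicative in $X$, and $\beta(X)^2=1$ because $f\otimes f\cong I$; hence $\beta\colon\Q(\A)\to\{\pm1\}$ defines a $\Z_2$-grading $\A=\A_+\oplus\A_-$. Its trivial component is $\A_+=(\text{sVec})_\A'=\C$, and the grading is faithful since $\FPdim(\A_+)=\FPdim(\C)=nd<2nd=\FPdim(\A)$. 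Therefore $\A$ is a $\Z_2$-extension of $\C$, and since the class of weakly group-theoretical fusion categories is closed under extensions by finite groups \cite[Proposition 4.1]{ENO3}, $\A$ is weakly group-theoretical. Applying the preceding Corollary now yields a braided equivalence $\A\cong\C(\Z_d,q)\boxtimes\B$ with $\B:=\C(\Z_d,q)_\A'$.

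Finally I would descend $\C(\Z_d,q)$ into $\C$. Under $\A\cong\C(\Z_d,q)\boxtimes\B$ the group of invertibles is $G(\A)\cong\Z_d\times G(\B)$; writing $f=(a,b)$, the order of $a$ divides both $d$ and $2$, so $a$ is trivial and $f\in\B$, i.e. $\text{sVec}\subseteq\B$. Since the two Deligne factors centralize one another, $\C(\Z_d,q)$ centralizes $\B$ and in particular centralizes $\text{sVec}$, whence $\C(\Z_d,q)\subseteq(\text{sVec})_\A'=\C$. As $\C(\Z_d,q)$ is non-degenerate, Müger's decomposition \cite[Theorem 3.13]{DrGNO2} gives $\C\cong\C(\Z_d,q)\boxtimes\C(\Z_d,q)_\C'$, as claimed. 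The main obstacle is the weak group-theoreticity of $\A$: the given minimal extension need not coincide with the one constructed in Theorem \ref{theo2sec3}, so its weak group-theoreticity cannot be quoted directly and must instead be transported from $\C$ through the $\Z_2$-extension structure furnished by the fermion. The hypothesis that $d$ is odd is used decisively in the last paragraph, as it forces $\text{sVec}$ into the complementary factor $\B$ rather than into $\C(\Z_d,q)$.
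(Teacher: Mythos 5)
Your proof is correct and follows exactly the route the paper intends: the paper states this corollary without a written proof, as a ``direct result'' of Theorems \ref{theo2sec3} and \ref{theo2sec4} (through the corollary immediately preceding it), and applying that corollary to the minimal extension $\A$ and then descending to $\C$ is precisely what you execute. The two details you supply are exactly the steps the paper leaves implicit, and both are handled correctly: first, that the \emph{given} minimal extension $\A$ (which need not be the one produced by Theorem \ref{theo2sec3}) is weakly group-theoretical, since the fermion $f$ defines a faithful $\Z_2$-grading of $\A$ whose trivial component is $(\text{sVec})_\A'=(\C_\A')_\A'=\C$ by non-degeneracy of $\A$, so that \cite[Proposition 4.1]{ENO3} applies; and second, that oddness of $d$ forces $f$ into the complementary factor $\B=\C(\Z_d,q)_\A'$, whence $\C(\Z_d,q)\subseteq \B_\A'\subseteq(\text{sVec})_\A'=\C$ and \cite[Theorem 3.13]{DrGNO2} concludes.
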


Recall that a braided  fusion category $\C$ of Frobenius-Perron dimension $p^ar^bd$ is weakly group-theoretical \cite[Proposition 3.14]{Yu}, where $p,r$ are primes, $a,b$ are non-negative integers, $d$ is a square-free integer such that $(pr,d)=1$; for non-degenerate fusion categories, this conclusion was first proved in \cite[Corollary 5.4]{Na}. Therefore, if $\C'\subseteq \text{sVec}$ and $\C$ is integral, then for all simple objects $X$ of $\C$, $\FPdim(X)^2$ divides $\FPdim(\C)$   by \cite[Theorem 2.11]{ENO3} and \cite[Corollary 3.4]{Yu}, hence $(\FPdim(X),d)=1$. Thus, Theorem \ref{theo1sec4}  shows  that $\C\cong\C(\Z_m,q)\boxtimes\D$, where  $\D$ is a braided fusion  category with M\"{u}ger center $\D'\subseteq \text{sVec}$, and $m=d$ or $\frac{d}{2}$.

In particular, we have the following corollary, which  generalizes  the conclusions of \cite[Theorem 4.7]{DongNa} and \cite[Proposition 3.8, Corollary 3.12]{Yu}.
\begin{coro}\label{coro2sec4}Let $\C$ be an integral braided fusion category of Frobenius-Perron dimension $p^nd$  and $\C'\subseteq \text{sVec}$, where $p$ is a prime, $n$  is a nonnegative integer and $d$ is a  square-free integer such that $(p,d)=1$. Then $\C$ is nilpotent and group-theoretical.
\end{coro}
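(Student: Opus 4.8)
The plan is to reduce the corollary to the structural splitting recorded in the paragraph immediately preceding it, and then to dispose of the two resulting factors separately. By that discussion, since $\C$ is integral with $\C'\subseteq\text{sVec}$ and $\FPdim(\C)=p^nd$, we may write $\C\cong\C(\Z_m,q)\boxtimes\D$ with $m=d$ or $m=\frac{d}{2}$, where $\C(\Z_m,q)$ is pointed non-degenerate and $\D$ is an integral braided fusion category with $\D'\subseteq\text{sVec}$ of Frobenius--Perron dimension $p^n$ (when $m=d$) or $2p^n$ (when $m=\frac{d}{2}$, which forces $d$ even and hence $p$ odd). Because a pointed category is both nilpotent and group-theoretical, and because both properties are stable under Deligne products --- for nilpotency via $(\A\boxtimes\B)_\text{ad}\cong\A_\text{ad}\boxtimes\B_\text{ad}$, and for group-theoreticity since a Deligne product of pointed categories is pointed and Morita equivalence is compatible with $\boxtimes$ --- it suffices to prove that $\D$ is nilpotent and group-theoretical.

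I would first treat the case $\FPdim(\D)=p^n$. Here $\D$ has prime-power dimension, so it is nilpotent by \cite[Theorem 8.28]{ENO1}. For group-theoreticity I would invoke the fact that an integral braided fusion category of prime-power dimension is group-theoretical; this is precisely the content of the base results \cite[Theorem 4.7]{DongNa} and \cite[Proposition 3.8, Corollary 3.12]{Yu} that the present corollary is meant to generalize, and it can also be read off from the structure theory of nilpotent braided categories together with the existence of a Lagrangian (Tannakian) subcategory in a non-degenerate category of prime-power dimension.

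For the remaining case $\FPdim(\D)=2p^n$ with $p$ odd, $\D$ is slightly degenerate of dimension twice an odd number. I would reduce to the previous case by splitting off the M\"uger center: since $\FPdim(\D)/2=p^n$ is odd, $\D$ splits as $\D\cong\text{sVec}\boxtimes\D_0$, where $\D_0$ is a non-degenerate integral braided fusion category of dimension $p^n$. Then $\text{sVec}$ is pointed, $\D_0$ is of prime-power dimension and so is handled by the previous paragraph, and closure of both properties under $\boxtimes$ yields that $\D$, and therefore $\C$, is nilpotent and group-theoretical.

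I expect the delicate point to be this last case. Establishing the splitting $\D\cong\text{sVec}\boxtimes\D_0$ for a slightly degenerate category whose dimension is $2\cdot(\text{odd})$ is the crux; the relevant obstruction is $2$-primary, and one must verify that it vanishes here (either directly, using that the fermion generating $\D'$ together with the oddness of $\FPdim(\D_0)$ forces a non-degenerate complement, or by citing the splitting already used in \cite{DongNa,Yu}). The accompanying claim that the prime-power factor $\D_0$ is genuinely \emph{group-theoretical} --- rather than merely weakly group-theoretical or solvable, which would follow at once from the dimension being a product of two prime powers --- is the second substantive ingredient. By contrast, nilpotency in every case is routine, following from \cite[Theorem 8.28]{ENO1} once the reduction to prime-power dimension has been carried out.
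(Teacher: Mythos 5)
Your overall strategy --- split off $\C(\Z_m,q)$ via the paragraph preceding the corollary, handle the complementary factor $\D$ of dimension $p^n$ or $2p^n$, and finish using closure of nilpotency and group-theoreticity under Deligne products --- is the same as the paper's. The genuine problem is the case you yourself flag as the crux, $\FPdim(\D)=2p^n$ with $p$ odd, where your logical dependency runs backwards. Your proposed direct justification of the splitting $\D\cong\text{sVec}\boxtimes\D_0$ cannot work as described: since $\text{sVec}=\D'$ is the M\"uger center, its centralizer in $\D$ is all of $\D$, so the fermion furnishes no candidate ``non-degenerate complement''; the free action of the fermion on simple objects holds in \emph{every} slightly degenerate category and by itself never yields a splitting (non-split slightly degenerate categories do exist in general, e.g.\ $PSU(2)_{4k+2}$, so integrality and the arithmetic of the dimension must genuinely enter). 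The route the cited literature actually offers for your splitting is: first prove $\D$ is \emph{nilpotent} --- this is exactly \cite[Corollary 3.12]{Yu}, which the paper quotes for precisely this case --- and then apply the Sylow-type decomposition of nilpotent braided fusion categories \cite{DrGNO1} to write $\D\cong\D_2\boxtimes\D_p$ and identify $\D_2\cong\text{sVec}$, $\D_p$ non-degenerate of dimension $p^n$. So nilpotency is the \emph{input} to the splitting, not a consequence of it; your plan to get nilpotency ``once the reduction to prime-power dimension has been carried out'' is circular unless you supply an independent proof of the splitting, and your fallback of citing \cite{DongNa,Yu} for the splitting effectively concedes the point.

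The paper itself sidesteps the splitting here entirely: for $\C'=\text{sVec}$ and $p$ odd it quotes \cite[Corollary 3.12]{Yu} for nilpotency and then \cite[Corollary 6.8]{DrGNO1} (nilpotent integral braided fusion categories are group-theoretical) to conclude; the Deligne-product reduction is only used in the cases $\C'=\text{Vec}$ and $p=2$. Relatedly, your parenthetical justification of the prime-power base case is wrong as stated: a non-degenerate integral fusion category of prime-power dimension $p^n$ need not contain a Lagrangian subcategory (none can exist when $n$ is odd, for dimension reasons --- $\C(\Z_p,q)$ is pointed, hence group-theoretical, yet has no Lagrangian subcategory); the correct criterion involves a Lagrangian subcategory of the \emph{Drinfeld center}, and the statement you actually need --- integral braided categories of prime-power dimension, indeed all nilpotent integral braided categories, are group-theoretical --- is the nontrivial theorem of \cite{DrGNO1} that the paper cites as its Corollary 6.8. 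With these two repairs (nilpotency via \cite[Corollary 3.12]{Yu} in the $2p^n$ case instead of an unproven splitting, and \cite[Corollary 6.8]{DrGNO1} for group-theoreticity throughout), your argument coincides with the paper's.
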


\begin{proof} If $\C'=\text{Vec}$, then $\C\cong \D\boxtimes \C(\Z_d,q)$ as braided fusion category by Theorem \ref{theo2sec4}, and $\FPdim(\D)=p^n$. If $\C'=\text{sVec}$, then $\C$ is nilpotent by \cite[Corollary 3.12]{Yu} when $p$ is odd; if $p=2$, then previous argument shows that $\C$ contains  $\C(\Z_d,q)$ as a fusion subcategory, consequently $\C\cong \D\boxtimes \C(\Z_d,q)$, where M\"{u}ger center $\D'\subseteq \text{sVec}$ and $\FPdim(\D)=2^n$. Since  $\C$  is braided equivalent to a Deligne tensor product fusion subcategories of prime power Frobenius-Perron dimensions, it is nilpotent by \cite[Theorem 8.28]{ENO1}, and $\C$ is group-theoretical \cite[Corollary 6.8]{DrGNO1}.
\end{proof}

\author{Victor Ostrik \\ \thanks{Email:\,vostrik@uoregon.edu}
\\{\small Department  of Mathematical, University of Oregon, Eugene 97402, USA}}\\
{\small Laboratory of Algebraic Geometry,
National Research University Higher School of Economics, Moscow, Russia}\\
\author{Zhiqiang Yu\\ \thanks{Email:\,zhiqyumath@yzu.edu.cn}
\\{\small School of Mathematical Sciences,  Yangzhou University, Yangzhou 225002, China}}

\begin{thebibliography}{50}
\bibitem[1]{BGNPRW}P. Bruillard, C. Galindo,  S-H. Ng, J. Plavnik, E. Rowell and Z. Wang, On the classification of weakly integral modular categories, J. Pure Appl. Algebra. $\mathbf{220}$ (2016),  no. 6, 2364-2388.
\bibitem[2]{BGHNPRW}P. Bruillard, C. Galindo, T. Hagge, S-H. Ng, J.Plavnik, E.Rowell, Z. Wang, Fermionic modular categories and the 16-fold way, J. Math. Phys, $\mathbf{58}$ (2017), 041704.
\bibitem[3]{DMNO}A. Davydov, M. M\"{u}ger, D. Nikshych and V. Ostrik, The  Witt group of  non-degenerate  braided fusion categories, J. Reine. Angew. Math. $\textbf{677}$ (2013), 135-177.
\bibitem[4]{DNO}A. Davydov, D. Nikshych and V. Ostrik, On the structure of Witt group of  braided fusion categories, Sel. Math. New. Ser. $\textbf{19}$ (2013), no. 1, 237-269.
\bibitem[5]{De}P. Deligne: Cat\'{e}gories tensorielles. Moscow Math. J. $\mathbf{2}$ (2002), no. 2, 227-248.
\bibitem[6]{DongNa} J. Dong and S. Natale, On the classification of almost square-free integral modular categories, Algebr. Reresent. Theory. $\textbf{21}$ (2018), no. 6, 1353-1368.
    \bibitem[7]{Drinfeld}V. Drinfeld, Unpublished note.
\bibitem[8]{DrGNO1}V. Drinfeld, S. Gelaki, D. Nikshych and V. Ostrik, Group-theoretical properties of nilpotent modular categories, arXiv: 0704.0195.
\bibitem[9]{DrGNO2}V. Drinfeld, S. Gelaki, D. Nikshych and V. Ostrik, On braided fusion categories \uppercase\expandafter{\romannumeral 1}, Sel. Math. New. Ser. $\textbf{16}$ (2010), no. 2, 1-119.
\bibitem[10]{EGNO}P. Etingof, S. Gelaki, D. Nikshych and V. Ostrik, Tensor categories, Mathematical Surveys and Monographs $\textbf{205}$, Amer. Math. Soc., 2015.
\bibitem[11]{ENO1}P. Etingof, D. Nikshych and V. Ostrik, On fusion categories, Ann. of Math. $\textbf{162}$ (2005), no. 2, 581-642.
\bibitem[12]{ENO2}P. Etingof, D. Nikshych and V. Ostrik, Fusion categories and homotopy theory, Quantum Topol. $\textbf{1}$  (2010), no. 3, 209-273.
\bibitem[13]{ENO3}P. Etingof, D. Nikshych and V. Ostrik, Weakly group-theoretical and solvable fusion categories, Adv. Math. $\textbf{226}$  (2011), no. 1, 176-205.
\bibitem[14]{GN} S. Gelaki and D. Nikshych, Nilpotent fusion categories, Adv. Math. $\textbf{217}$ (2008), no. 3, 1053-1071.
\bibitem[15]{JR}T. Johnson-Freyd and D. Reutter, Minimal nondegenerate extensions, arXiv:2105.15167.
\bibitem[16]{Kir}A. Kirillov, Jr, On $G$-equivariant modular categories, arXiv:0401119.
\bibitem[17]{LKW}T. Lan, L. Kong and X. Wen, Modular extensions of unitary braided fusion categories and 2 + 1D topological/SPT orders with symmetries, Comm. Math. Phys. $\mathbf{351}$ (2017), no. 2, 709-739.
\bibitem[18]{Mu}M. M\"{u}ger, On the structure of  modular categories. Proc. London Math. Soc, $\mathbf{87}$ (2003), no. 3, 291-308.
\bibitem[19]{Na}S. Natale, On the core of a weakly group-theoretical braided fusion category, Internat. J. Math. $\textbf{29}$ (2018), no. 2, 1850012, 23 pp.
\bibitem[20]{OsYu} V. Ostrik and Z. Yu, On the structure of Witt groups, in preparation.
\bibitem[21]{Yu}Z. Yu, On slightly degenerate fusion categories, J. Algebra, $\mathbf{559}$ (2020), 408-431.
\end{thebibliography}
\end{document}